\newtheorem{theorem}{Theorem}
\newtheorem{proposition}[theorem]{Proposition}
\newtheorem{lemma}[theorem]{Lemma}
\newenvironment{proof*}{\vskip 2mm\noindent {}}{\hfill $\Box$ \vskip 2mm}
\newtheorem{example}[theorem]{Example}
\newcommand{\C}{\mathbb C}
\newcommand{\D}{\mathbb D}
\newcommand{\R}{\mathbb R}
\newcommand{\eps}{\varepsilon}
\newcommand{\vphi}{\varphi}
\newcommand{\OO}{\mathcal O}
\def\Re{\operatorname{Re}}
\def\Im{\operatorname{Im}}
\def\dist{\operatorname{dist}}
\def\span{\operatorname{span}}
\begin{document}

\title{Rigid characterizations of pseudoconvex domains}

\author{Nikolai Nikolov}
\address{Institute of Mathematics and Informatics\\ Bulgarian Academy of
Sciences\\1113 Sofia, Bulgaria} \email{nik@math.bas.bg }

\author{Pascal J.~Thomas}
\address{Universit\'e de Toulouse\\ UPS, INSA, UT1, UTM \\
Institut de Math\'e\-ma\-tiques de Toulouse\\
F-31062 Toulouse, France} \email{pthomas@math.univ-toulouse.fr}

\subjclass[2000]{32F17}

\keywords{pseudoconvex domain, (weakly) linearly convex domain, convex domain}

\begin{thanks}{This paper was written during the stay of the first-named
author at the Paul Sabatier University, Toulouse (October-November 2010) supported by a
CNRS--BAS programme ``Convention d'\'echanges'' No 23811. He would like to thank Peter Pflug
for helpful discussions about Proposition \ref{12}.}
\end{thanks}

\begin{abstract} We prove that an open set $D$ in $\C^n$ is pseudoconvex if and only if for
any $z\in D$ the largest balanced domain centered at $z$ and contained in $D$ is pseudoconvex,
and consider analogues of that characterization in the linearly convex case.
\end{abstract}

\maketitle

\section{Introduction}
\label{intro}

Geometric convexity of a domain is characterized by its intersection with
real lines, and invariant under real affine maps. Pseudoconvexity is a generalization of
that notion that was designed, among other things, to be invariant under all biholomorphic maps,
and can be characterized by the behavior of analytic disks (Kontinuit\"atsatz). Linear convexity
and $\C$-convexity are intermediate notions that bring into play (respectively)
complex hyperplanes and complex lines, and are invariant under complex affine maps.

In this paper, we exploit the parallels between all those notions, and
highlight the similarities and differences, and the crucial role played
by smoothness of the domains being considered.

\section{Balanced indicators}
\label{balind}

Let $D$ be an open set in $\C^n,$ $z\in D$ and $X\in\Bbb
C^n.$ We say that a domain is \emph{balanced, centered at $a$} if for any $z\in D$,
$\zeta \in \C$ with $|\zeta|\le 1$, then $a+\zeta(z-a) \in D$.

Denote by $d_D(z,X)$ the distance from $z$ to $\partial D$
in the complex direction $X$ (possibly $d_D(z,X)=\infty$):
$$
d_D(z,X)=\sup\{r>0:z+\lambda X\in D\hbox{ if
}|\lambda|<r\}.
$$
Recall that if $-\log d_D(\cdot,X)$ is a plurisubharmonic function for any $X\in\C^n,$
then $D$ is pseudoconvex, and vice versa.

Closely related to this is the largest balanced domain centered at $z$ and contained
in $D$, i.e.
$B_{D,z}=z+I_{D,z},$ where $I_{D,z}$ is the balanced indicatrix of $D$ at $z:$
$$I_{D,z}=\left\{X\in \C^n :z+\lambda X \in D\mbox{\ if\ }|\lambda|\le 1\right\}.$$

Finally, consider the global version of this, the Hartogs-like domain
$$
H_D=\left\{(z,w)\in D\times\C^n:w\in I_{D,z}\right\}.
$$
If $D$ is pseudoconvex,  then $-\log d_D$ is a plurisubharmonic function on $D\times\C^n$
(cf. \cite[Proposition 2.2.21]{Jar-Pfl})\footnote{The authors thank P.~Pflug for pointing
out this fact.}, thus $H_D$ is pseudoconvex.

\subsection{Pseudoconvexity}

The main purpose of this note is
to characterize the pseudoconvexity of an open set $D$ in $\C^n$ in terms of pseudoconvexity
of $B_{D,z},$ $z\in D,$ i.e. in terms of pseudoconvexity in the "vertical" directions of $H_D$.

\begin{theorem}
\label{pcvxhart}
Let $D$ be a proper open set of $\C^n$. Then the following
properties are equivalent:
\begin{enumerate}
\item $D$ is pseudoconvex.
\item $H_D$ is pseudoconvex.
\item $B_{D,z}$ is pseudoconvex, for any $z\in D.$
\end{enumerate}
\end{theorem}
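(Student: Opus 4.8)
The plan is to prove $(1)\Rightarrow(2)\Rightarrow(3)\Rightarrow(1)$, after recording the elementary dictionary relating the indicatrices to $d_D$. For fixed $z\in D$ one checks that $I_{D,z}$ is an open, balanced, star-shaped (hence connected) proper subdomain of $\C^n$, and — by compactness of $\overline{\D}$ together with openness of $D$ — that $I_{D,z}=\{X:d_D(z,X)>1\}$; since $d_D(z,\lambda X)=d_D(z,X)/|\lambda|$ for $\lambda\neq 0$, the Minkowski functional of $B_{D,z}$ is $X\mapsto 1/d_D(z,X)$ (with $1/\infty=0$). As a balanced domain is pseudoconvex precisely when the logarithm of its Minkowski functional is plurisubharmonic, property $(3)$ is equivalent to: \emph{$X\mapsto -\log d_D(z,X)$ is plurisubharmonic on $\C^n$ for every $z\in D$}, whereas $(1)$, by the criterion recalled above, is: \emph{$z\mapsto -\log d_D(z,X)$ is plurisubharmonic on $D$ for every $X$}. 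So the heart of the theorem is an interchange of the roles of the two variables of $-\log d_D$, together with the easy equivalence with $(2)$.

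For $(1)\Rightarrow(2)$ I would invoke the quoted fact that $-\log d_D$ is plurisubharmonic on $D\times\C^n$ when $D$ is pseudoconvex; then $H_D=\{(z,w):-\log d_D(z,w)<0\}$ is pseudoconvex, being a sublevel set of a plurisubharmonic function on the pseudoconvex set $D\times\C^n$. For $(2)\Rightarrow(3)$: $I_{D,z}=H_D\cap(\{z\}\times\C^n)$ is the intersection of the pseudoconvex set $H_D$ with an affine complex subspace, hence pseudoconvex, and $B_{D,z}$ is a translate of it. (The same slicing gives $(2)\Rightarrow(1)$ at once: since $d_D(z,0)=+\infty$ one has $D\times\{0\}=H_D\cap(\C^n\times\{0\})$, again a pseudoconvex slice, so $D$ is pseudoconvex; thus $(1)$ and $(2)$ are in any case trivially equivalent.)

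The real work is $(3)\Rightarrow(1)$, which I would prove by contraposition. If $D$ is not pseudoconvex, then $z\mapsto -\log d_D(z,X)$ fails to be plurisubharmonic for some $X$; after a complex-linear change of coordinates (harmless for both $(3)$ and pseudoconvexity) take $X=e_n$ and localize the failure to an affine complex line. That line cannot have direction $e_n$, since on such a line $-\log d_D(\cdot,e_n)$ reduces to $-\log\dist$ in one complex variable, which is subharmonic; so, adjusting coordinates again, the line is $\C e_1$ through the origin, and we pass to the $2$-plane $\Pi=\{z_2=\dots=z_{n-1}=0\}$. Since $B_{D\cap\Pi,q}=B_{D,q}\cap\Pi$ for $q\in\Pi$, property $(3)$ descends to $E:=D\cap\Pi\subseteq\C^2$, and one reduces to the planar situation: $E$ satisfies $(3)$, but $-\log d_E(\cdot,e_2)$ is not subharmonic along $\C e_1$. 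A standard reduction (solving a Dirichlet problem on a disc and exponentiating) then furnishes $\rho>0$ and a zero-free $g\in\OO(\overline{\D}_\rho)$ for which the tube $\mathcal T:=\{(t,\mu):|t|<\rho,\ |\mu|<e^{-m}|g(t)|\}$, with $m:=\max_{|t|\le\rho}\bigl(-\log d_E((t,0),e_2)+\log|g(t)|\bigr)>0$, satisfies $\mathcal T\subseteq E$ and is pseudoconvex (its log-radius being harmonic over $\D_\rho$), while at an interior maximum point $t_0$ the contact is sharp: $d_E((t_0,0),e_2)=e^{-m}|g(t_0)|$, so $\mathcal T$ and $E$ share a boundary point $q_0=(t_0,\lambda_0)$ with $|\lambda_0|=e^{-m}|g(t_0)|$. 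The aim is then to contradict $(3)$ at a suitable base point $p$ near this configuration, by exhibiting a complex line of directions — the natural candidate being directions $Y=e_2+\sigma e_1$ — along which $Y\mapsto -\log d_E(p,Y)$ violates the sub-mean-value inequality.

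The step I expect to be the obstacle is precisely this extraction. The difficulty is structural: the Hartogs figure is "one‑directional" — it describes $\partial E$ along $e_2$-slices as the base moves along $e_1$ — whereas $(3)$ is a statement about all directions at one point; and the obvious comparison $\mathcal T\subseteq E$ yields only $d_E(p,\cdot)\ge d_{\mathcal T}(p,\cdot)$, i.e. $-\log d_E(p,\cdot)\le -\log d_{\mathcal T}(p,\cdot)$, an inequality pointing the wrong way (both sides subharmonic, with equality in the direction $e_2$). Getting a genuine contradiction should require using the sharp contact at $q_0$ in an essential way — e.g. choosing $p$ in $E\setminus\mathcal T$ just outside the tube, or letting $p\to q_0$, or exploiting that $\mathcal T$ is exactly the pseudoconvex hull of the given one-directional data so that $E$ must protrude beyond $\mathcal T$ somewhere — and quite possibly iterating the construction or passing to a limit of base points. (An alternative avenue for $(3)\Rightarrow(2)$ would be to show directly that $H_D$ carries enough structure to be pseudoconvex once its fibres are — for instance $(z,w)\in H_D$ and $|\zeta|+|\eta|\le 1$ imply $(z+\zeta w,\eta w)\in H_D$ — but I do not see how to close this either.) Converting the failure of subharmonicity in the $z$-variable into an honest failure of plurisubharmonicity of a single Minkowski functional in the direction variable is the crux of the matter.
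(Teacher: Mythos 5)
Your handling of $(1)\Leftrightarrow(2)$ and $(2)\Rightarrow(3)$ agrees with the paper and is correct. The genuine gap is exactly where you locate it: $(3)\Rightarrow(1)$ is not closed, and the Hartogs-tube strategy cannot close it for the structural reason you yourself give. Your comparison domain $\mathcal T$ is pseudoconvex, so $1/d_{\mathcal T}(p,\cdot)$ is the Minkowski functional of a pseudoconvex balanced domain and is log-plurisubharmonic in the direction variable; bounding $1/d_E(p,\cdot)$ \emph{from above} by such a function, with equality only at the centre of a circle of directions, can never force a violation of the sub-mean-value inequality. No choice of $p$ near the contact point $q_0$, and no limiting procedure, repairs this as long as the domain you compare with is pseudoconvex.

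The paper's missing idea is to use a comparison domain that is itself \emph{non-pseudoconvex} and explicitly computable. By H\"ormander's Theorem 4.1.25 (Proposition \ref{hor}), if $D$ is not pseudoconvex then, after an affine change of coordinates, $D$ contains near $0\in\partial D$ the quadric model $E=\{\Re z+(\Im z)^2-(\Re w)^2+c(\Im w)^2<0\}$ with $c<1$, in a suitable $2$-plane. One takes the base point $z_\delta=(-\delta,0)$ and the circle of \emph{directions} $X_{se^{i\theta}}=(\delta,se^{i\theta})$ centred at $X_0=(\delta,0)$, the direction aimed straight at the bad boundary point, where $d_E(z_\delta,X_0)=d_D(\tilde z_\delta,X_0)=1$ exactly. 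Lemma \ref{5} then computes that for $3(1-c)^{-1/2}\delta^{3/2}\le s\le\delta$ the mean of $1/d_E(z_\delta,X_{se^{i\theta}})$ over $\theta$ is strictly less than $1$: the negative Levi eigenvalue of the model in the $w$-direction makes the $\theta$-average of the quadratic term $r^2s^2\bigl((1+c)\sin^2(\alpha+\theta)-1\bigr)$ strictly negative, of size $\approx(1-c)s^2/(2\delta)$, which beats the $O(\delta^2)$ error terms precisely because $s\gg\delta^{3/2}$. Since $E\subset D$ gives $1/d_D\le 1/d_E$ on the whole circle while $1/d_D(\tilde z_\delta,X_0)=1$ at its centre, the Minkowski functional of $I_{D,\tilde z_\delta}$ fails the sub-mean-value inequality, so $B_{D,\tilde z_\delta}$ is not pseudoconvex, contradicting $(3)$ (this is Proposition \ref{4}, which in fact only needs $(3)$ for base points near $\partial D$). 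In short: do not form the pseudoconvex hull of your one-directional boundary data; import H\"ormander's explicit non-pseudoconvex quadric and verify the failure of $(3)$ for it by direct calculation on a circle of directions of radius comparable to $\delta^{3/2}$.
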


We have already seen that (1) implies (2), and (2) implies (3) is trivial (slice by
the sets $\{z\} \times \mathbb C^n$, for $z\in D$). The remaining implication
is implied by the following.

\begin{proposition}\label{4} Let $D$ be a proper open set of $\C^n$ and let $U$ be a neighborhood
of $\partial D.$ If $I_{D,a}$ is a pseudoconvex domain for any $a\in D\cap U,$ then $D$ is itself pseudoconvex.
\end{proposition}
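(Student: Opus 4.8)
The plan is to reduce, by classical facts, to a two‑dimensional continuity–principle statement, and that statement will be the hard part. First I would recall that $D$ is pseudoconvex as soon as $-\log\dist(\cdot,\partial D)$ is plurisubharmonic on $D\cap W$ for some neighbourhood $W$ of $\partial D$, that $\dist(z,\partial D)=\inf_{\|X\|=1}d_D(z,X)$, and hence that
\[
-\log\dist(z,\partial D)=\sup_{\|X\|=1}\bigl(-\log d_D(z,X)\bigr),
\]
which is a continuous function on $D$. Being a continuous, hence upper semicontinuous, supremum, it is plurisubharmonic near $\partial D$ as soon as each $z\mapsto -\log d_D(z,X)$ is; so it suffices to prove that for every $X\in\C^n$ the function $z\mapsto -\log d_D(z,X)$ is plurisubharmonic on $D\cap U$. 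Note that the hypothesis says exactly that, for every $a\in D\cap U$, the function $Y\mapsto -\log d_D(a,Y)$ is plurisubharmonic on $\C^n$, since $I_{D,a}$ is a balanced domain with Minkowski functional $1/d_D(a,\cdot)$.

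Next I would fix $X$ and a point $z_0\in D\cap U$ and test the subharmonicity of $\zeta\mapsto -\log d_D(z_0+\zeta v,X)$ on small discs. If $v$ is proportional to $X$ this is automatic, since $-\log d_D(z_0+\zeta X,X)$ is $-\log$ of the distance in $\C$ from $\zeta$ to the complement of the slice $\{\mu:z_0+\mu X\in D\}$. If $v$ is not proportional to $X$, let $\Pi=z_0+\span_\C(v,X)$; then $d_D(z_0+\zeta v,X)=d_{D\cap\Pi}(z_0+\zeta v,X)$, the relative boundary of $D\cap\Pi$ in $\Pi$ lies in $\partial D$, and $I_{D\cap\Pi,a}$ is the slice of the pseudoconvex balanced domain $I_{D,a}$ by the direction of $\Pi$, hence pseudoconvex, for $a$ near that relative boundary. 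So after a $\C$‑linear change of coordinates identifying $\Pi$ with $\C^2$, everything reduces to proving: \emph{if $D\subset\C^2$ is open and $I_{D,a}$ is pseudoconvex for all $a\in D$ near $\partial D$, then for each $c\in\C$ the function $z_1\mapsto -\log\dist\bigl(c,\partial D_{z_1}\bigr)$, with $D_{z_1}=\{w:(z_1,w)\in D\}$, is subharmonic on the part of $\{z_1:(z_1,c)\in D\}$ near $\partial D$.}

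For this I would argue by the continuity principle. The function being upper semicontinuous, it is enough to check the barrier inequality on a small disc $\{|z_1-z_1^0|\le r\}\subset\{z_1:(z_1,c)\in D\}$ near $\partial D$: given a polynomial $q$ with $\overline{\D}(c,e^{-\Re q(z_1)})\subset D_{z_1}$ for $|z_1-z_1^0|=r$, show $\overline{\D}(c,e^{-\Re q(z_1^0)})\subset D_{z_1^0}$. Consider the analytic discs
\[
\delta_\eta(z_1)=\bigl(z_1,\ c+\eta\, e^{-q(z_1)}\bigr),\qquad |z_1-z_1^0|\le r,\quad \eta\in\overline{\D}.
\]
Here $\delta_0(\overline{\D})\subset D$ and $\delta_\eta(\{|z_1-z_1^0|=r\})\Subset D$ for every $\eta$, so $E:=\{\eta\in\overline{\D}:\delta_\eta(\{|z_1-z_1^0|\le r\})\subset D\}$ is a nonempty open subset of $\overline{\D}$; if $E=\overline{\D}$ then $\delta_\eta(z_1^0)\in D$ for all $\eta$, which is exactly the desired inclusion. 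So the whole matter is the closedness of $E$: were it to fail, one would get $\eta_*\in\partial E$, a point $z_1^*$ with $|z_1^*-z_1^0|<r$ and $p:=\delta_{\eta_*}(z_1^*)\in\partial D$, and discs $\delta_{\eta_j}\subset D$ with $\eta_j\to\eta_*$ clustering along $\delta_{\eta_*}$.

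The hard part will be to rule out this last configuration by means of the pseudoconvexity of the indicatrices $I_{D,a}$ at points $a\in D$ close to $p$ — this is where the hypothesis is genuinely used, and where a convexity‑type argument would instead invoke a supporting complex hyperplane at $p$. I would try a rescaling: dilating $D$ about $p$ at the rate $\varepsilon_j=\dist(\delta_{\eta_j}(z_1^*),\partial D)\to0$, the $\delta_\eta$ (graphs with uniformly bounded derivatives, for $\eta$ in a window of size $\varepsilon_j$ about $\eta_*$) flatten to a family of parallel complex lines, the dilated domains subconverge to a tangent domain $\widehat D$ with $0\in\partial\widehat D$, and, since $I_{\lambda D,\lambda a}=\lambda I_{D,a}$ and pseudoconvexity is dilation invariant, the dilated indicatrices pass to pseudoconvex indicatrices of $\widehat D$ near its boundary; the limiting linear configuration in $\widehat D$, whose completion escapes $\widehat D$, should then contradict the pseudoconvexity of one of these indicatrices. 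The delicate point is that the naive isotropic dilation sends the controlling boundary circle to infinity, so this probably requires an anisotropic rescaling tuned to $q$, or a direct potential‑theoretic estimate using the explicit form of the $\delta_\eta$. This last step is where I expect the real work to lie; the reductions above are routine.
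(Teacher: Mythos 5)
There is a genuine gap, and it sits exactly where you say it does: the ``hard part'' you defer is not a technical remainder but the entire content of the proposition. Your reductions (localization of pseudoconvexity near $\partial D$, passing from $-\log\dist(\cdot,\partial D)$ to the directional functions $-\log d_D(\cdot,X)$, slicing to a $2$-plane, and the Hartogs continuity-principle setup with the discs $\delta_\eta(z_1)=(z_1,c+\eta e^{-q(z_1)})$) are standard and essentially sound, but they only reformulate the problem: you still must show that if the family $\delta_\eta$ degenerates onto a boundary point $p$, then some indicatrix $I_{D,a}$ at a nearby interior point $a$ fails to be pseudoconvex. Your proposed rescaling cannot do this as sketched. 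An isotropic dilation about $p$ sees only first-order data, and the tangent object $\widehat D$ is generically a half-space, whose indicatrices are convex, hence pseudoconvex -- no contradiction. The obstruction to pseudoconvexity is a second-order (Levi-form) phenomenon, and nothing in your limiting ``parallel complex lines'' configuration retains that information; you acknowledge this yourself when you note that the controlling boundary circle escapes to infinity. So the argument does not close.

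For comparison, the paper works in the contrapositive and extracts the second-order data explicitly: if $D$ is not pseudoconvex, H\"ormander's Theorem 4.1.25 (Proposition \ref{hor}) provides, after an affine change of coordinates, a quadratic model set $E=\{\Re z+(\Im z)^2-(\Re w)^2+c(\Im w)^2<0\}$, $c<1$, contained in $D$ near a boundary point. Lemma \ref{5} then shows by direct computation that, for $z_\delta=(-\delta,0)$ and directions $X_{se^{i\theta}}=(\delta,se^{i\theta})$ with $s$ in a suitable range, the average over $\theta$ of $1/d_E(z_\delta,X_{se^{i\theta}})$ is strictly less than $1=1/d_D(z_\delta,X_0)$; this violates the sub-mean-value inequality for the Minkowski functional of $I_{D,z_\delta}$, so that indicatrix is not pseudoconvex. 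Note that the contradiction is found by varying the \emph{direction} $X$ over a circle at a fixed base point, i.e.\ entirely inside one indicatrix, whereas your scheme tries to propagate information in the base variable $z$; the quantitative estimate of Lemma \ref{5} is precisely the ingredient your proposal is missing, and it is not routine. To repair your approach you would need to supply an analogue of that estimate (or of H\"ormander's normal form) at the touching point $p$.
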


To prove Proposition \ref{4} (and other propositions below),
we shall use \cite[Theorem 4.1.25]{Hor1}\footnote{The first inequality on p. 242 in the proof must contains
an obvious extra term. Otherwise, it is not true in general; for example, take the domain in $\C^2$ given by
$\Re z<(\Re w)^2.$ But the end result does hold.}, namely

\begin{proposition}\label{hor} If an open set $D$ in $\C^n$ is not pseudoconvex, then there
is a point $a\in\partial D,$ say the origin, and a real-valued quadratic polynomial $q$
such that $q(a)=0,$ $\partial q(a)\neq 0,$
$$\sum_{j,k=1}^n\frac{\partial^2 q}{\partial z_j\partial\overline{z_k}}(a)X_j\overline{X_k}<0$$
for some vector $X\in\C^n$ with $\langle\partial q(a),X\rangle=0,$ and $D$ contains the set $\{q<0\}$ near $a.$

Therefore, after an affine change of coordinates, we may assume $0\in\partial D$ and, near this point, $D$ contains
the set $$\{z\in\C^n:0>\Re z_1+(\Im z_1)^2+|z_2|^2+\dots+|z_{n-1}|^2+c(\Im z_n)^2-(\Re z_n)^2\},$$
where $c<1.$
\end{proposition}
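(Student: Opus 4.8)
The plan is to run the classical argument behind \cite[Theorem~4.1.25]{Hor1} in four steps: (i) reduce the failure of pseudoconvexity to a failure of subharmonicity of $-\log d_D(\cdot,X)$ along a single complex line; (ii) turn that failure, by subtracting a harmonic polynomial, into a sharp majorization of a distance by the modulus of an entire function; (iii) use a continuity argument on a family of analytic discs to pin down a boundary point at which $D$ bulges outward; (iv) perform a second-order analysis there to produce the quadratic $q$, and then normalize it. For steps (i)--(ii): since $-\log d_D(\cdot,X)$ is plurisubharmonic for every $X$ exactly when $D$ is pseudoconvex, non-pseudoconvexity yields a direction $X_0$, a point $z^0\in D$ and a vector $a$ such that $\zeta\mapsto -\log d_D(z^0+\zeta a,X_0)$ fails to be subharmonic near $\zeta=0$; after rescaling $a$ we may assume the closed disc $\{z^0+\zeta a:|\zeta|\le1\}$ lies in $D$ and that $-\log d_D(z^0,X_0)$ strictly exceeds the mean of $-\log d_D(z^0+e^{i\theta}a,X_0)$ over $\theta\in[0,2\pi)$. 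Replacing the boundary values by their Poisson integral, bumping it up by a small positive constant, and approximating the resulting harmonic function on $\overline{\D}$ by $\Re f$ with $f$ a holomorphic polynomial in $\zeta$, one gets $h:=e^{-f}$, entire and zero-free, with $|h(\zeta)|<d_D(z^0+\zeta a,X_0)$ for $|\zeta|=1$ and $|h(0)|>d_D(z^0,X_0)$.

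For step (iii): the second inequality gives $\lambda_0$ with $|\lambda_0|<|h(0)|$ and $z^0+\lambda_0X_0\notin D$, while the first shows that for every $\mu\in\overline{\D}$ the entire disc $\Phi_\mu(\zeta):=z^0+\zeta a+\mu h(\zeta)X_0$ maps $\partial\D$ into $D$; moreover $\Phi_0(\overline{\D})\subset D$ and $\Phi_{\lambda_0/h(0)}(0)\notin D$. Letting $t_*$ be the first $t\in[0,1]$ for which $\Phi_{t\lambda_0/h(0)}(\overline{\D})$ is no longer contained in $D$ (it exists because $D$ is open and $\overline{\D}$ compact), the disc $\Phi_*:=\Phi_{t_*\lambda_0/h(0)}$ satisfies $\Phi_*(\overline{\D})\subset\overline{D}$, $\Phi_*(\partial\D)\subset D$, and touches $\partial D$ at some $\zeta_*$ with $|\zeta_*|<1$; translating, we may take $\Phi_*(\zeta_*)=0\in\partial D$. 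Near $0$ the open set $D$ then contains the slightly shrunk discs $\Phi_{t\lambda_0/h(0)}(\overline{\D})$ with $t<t_*$, that is, a thin tube about the entire disc $\Phi_*$ which pinches down to the point $0$, and the direction $X_0$ points ``outward'' at $0$, since increasing $t$ beyond $t_*$ pushes the contact point out of $D$.

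For step (iv): one expands the majorization $d_D(\cdot,X_0)\ge|h|$ to second order in $\zeta-\zeta_*$ and in the disc parameter at $0$. The strict superharmonicity supplied by step (i) contributes a \emph{negative} second-order term, which becomes a negative Levi eigenvalue, in a complex direction tangent to the level set we are about to build at $0$, of a real quadratic $q$ with $q(0)=0$ and $D\supset\{q<0\}$ near $0$; the nonvanishing $\partial q(0)\neq0$ is forced by the outward direction $X_0$ detected above (the linear part of $q$ is essentially $\Re$ of a multiple of the $X_0$-coordinate). The passage to the explicit normal form is then a linear-algebra exercise: rotate $\partial q(0)$ onto $\tfrac12\,dz_1$ so that $\Re z_1$ is the linear part, complete squares, diagonalize the Levi form, take the distinguished negative direction to be $z_n$, and enlarge the coefficients of the remaining squares (which only shrinks $\{q<0\}$ and hence keeps it inside $D$) to arrive at $\Re z_1+(\Im z_1)^2+|z_2|^2+\dots+|z_{n-1}|^2+c(\Im z_n)^2-(\Re z_n)^2$; the inequality $c<1$ is exactly the statement that the $z_n$-Levi eigenvalue $\tfrac{c-1}{2}$ of this quadratic is negative.

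The soft parts --- steps (i)--(iii) and the final normalization --- are routine; the crux is the second-order extraction of $q$ in step (iv). Since $d_D$ is only Lipschitz it cannot be differentiated directly, so the expansion has to be carried through the sub-mean-value inequality and the explicit majorant $h$; and, as warned in the footnote above, the naive form of the key inequality (Hörmander, p.~242) is missing a term, so one must retain enough of the expansion both to obtain the correct sign of the Levi eigenvalue and to guarantee $\partial q(0)\neq0$. Getting that bookkeeping right is where the real work lies.
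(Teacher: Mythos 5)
The paper does not prove this statement at all: it is quoted from H\"ormander \cite[Theorem 4.1.25]{Hor1}, the authors adding only the footnote that flags a missing term on p.~242 of the printed proof, plus the (unproved) reduction to the explicit normal form. So the only question is whether your reconstruction stands on its own. Your steps (i)--(iii) are the correct and standard skeleton of H\"ormander's argument: the failure of subharmonicity of $-\log d_D(\cdot,X_0)$ on a disc, the zero-free holomorphic majorant $h=e^{-f}$ with $|h|<d_D$ on the boundary circle and $|h(0)|>d_D(z^0,X_0)$, and the continuity method on the discs $\Phi_\mu$ producing a first touching time $t_*$ and an interior contact point. The closing normalization (send $\partial q(0)$ to $\tfrac12\,dz_1$ and the bad direction to $e_n$, diagonalize the $z_n$-block so that its negative trace becomes $c-1<0$, absorb cross terms and the pluriharmonic part into enlarged squares, and discard $(\Re z_1)^2$ as it is $O(|z|^4)$ on $\{q<0\}$) is also essentially right.

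The gap is exactly where you yourself locate it: step (iv) is asserted, not carried out, and the one concrete instruction you give for it does not work as stated. You propose to ``expand the majorization $d_D(\cdot,X_0)\ge |h|$ to second order in $\zeta-\zeta_*$'', but that inequality is only established on $|\zeta|=1$, whereas $|\zeta_*|<1$; at the touching point the \emph{reverse} inequality $d_D(z^0+\zeta_* a,X_0)\le |\mu_* h(\zeta_*)|$ holds. What is actually available near $\zeta_*$ is the three-real-parameter family $\{z^0+\zeta a+t\mu_* h(\zeta)X_0:\ |\zeta|\le 1,\ 0\le t<1\}\subset D$ together with the openness of $D$; converting this into an open set $\{q<0\}\subset D$ with $\partial q(0)\ne 0$ and a \emph{strictly} negative Levi eigenvalue in a complex tangent direction --- and, for $n>2$, supplying the transverse thickening responsible for the $|z_2|^2+\dots+|z_{n-1}|^2$ terms, since the swept family is nowhere open in $\C^n$ --- is precisely the delicate second-order computation that the paper's footnote warns about. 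In particular you never explain why the superharmonicity defect, which lives at $\zeta=0$, yields \emph{strict} negativity of the Levi form at the contact point $\zeta_*$, nor why $\partial q(0)\neq 0$ survives the bookkeeping. Until that computation is written out, the proposal is an outline of the known proof rather than a proof.
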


\begin{proof*}{\it Proof of Proposition \ref{4}.}
Assume that $D$ is not pseudoconvex. By Proposition \ref{hor}, we may suppose that
$$D\supset E=\{(z,w)\in\D^2_\eps:\rho(z,w)<0\},$$
where $\rho(z,w)=\Re z+(\Im z)^2-(\Re w)^2+c(\Im w)^2$ and $c<1$
($\D^2_\eps$ is the bidisc with center $0$ and radius $\eps>0$).

For $\delta>0$ and $X\in\C^2,$ let $z_\delta=(-\delta,0)$ and $r_\delta(X)=d_E(z_\delta,X)$.
We write, for $\eta \in \C$, $X_\eta=(\delta,\eta)$.

\begin{lemma}\label{5} For any small $\delta>0$ and $\delta\ge s\ge3(1-c)^{-1/2} \delta^{3/2},$
$$\int_0^{2\pi} \frac1{r_\delta (X_{se^{i \theta}})}\frac{d\theta}{2\pi} <1.$$
\end{lemma}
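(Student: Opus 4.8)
The plan is to compute $r_\delta(X_{se^{i\theta}})=d_E(z_\delta,X_{se^{i\theta}})$ almost explicitly and then integrate. Write $z_\delta+\lambda X_{se^{i\theta}}=(\delta(\lambda-1),s\lambda e^{i\theta})$; for $\delta$ small this point lies in $\D^2_\eps$ as soon as $|\lambda|\le 2$, and the computation below shows $r_\delta(X_{se^{i\theta}})<2$, so only the inequality $\rho<0$ is relevant. Putting $\lambda=Re^{i\psi}$ and substituting into $\rho$, the equation $\rho(z_\delta+\lambda X_{se^{i\theta}})=0$ becomes, after dividing by $\delta$,
$$A(\psi,\theta)R^2+R\cos\psi-1=0,\qquad A(\psi,\theta)=\delta\sin^2\psi-\frac{s^2}{\delta}\bigl[(1+c)\cos^2(\psi+\theta)-c\bigr].$$
Since $\rho(z_\delta)=-\delta<0$, the number $r_\delta(X_{se^{i\theta}})$ is the minimum over $\psi$ of the least positive root of this quadratic, whence
$$\frac{1}{r_\delta(X_{se^{i\theta}})}=\max_{\psi}\ \frac{\cos\psi+\sqrt{\cos^2\psi+4A(\psi,\theta)}}{2}.$$

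Next I would localize and linearize. Because $|A(\psi,\theta)|=O(\delta)$, the right-hand side equals $1+O(\delta)$ at $\psi=0$ but is $\le|\cos\psi|+O(\sqrt\delta)$ in general, so for $\delta$ small the maximum is attained in the range $|\psi|\le\pi/4$, where $\cos^2\psi+4A>0$. On that range I would use $\sqrt{1+t}\le1+t/2$ with $1+t=\cos^2\psi+4A$; writing $u=\frac{1-\cos\psi}{2}$ (so that $\sin^2\psi=4u(1-u)$) and using $(1+c)\cos^2(\psi+\theta)-c=1-(1+c)\sin^2(\psi+\theta)$ together with $(1-u)^2+4\delta u(1-u)\le1-u$, the estimate collapses to
$$\frac{1}{r_\delta(X_{se^{i\theta}})}\le1-\frac{s^2}{\delta}+W(\theta),\qquad W(\theta):=\max_{|\psi|\le\pi/4}\Bigl[\frac{s^2(1+c)}{\delta}\sin^2(\psi+\theta)-\frac{1-\cos\psi}{2}\Bigr].$$

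It then remains to prove $\int_0^{2\pi}W(\theta)\,\frac{d\theta}{2\pi}<\frac{s^2}{\delta}$. If $c\le-1$ then $\frac{s^2(1+c)}{\delta}\le0$, so $W\le0$ and the lemma follows at once. If $-1<c<1$, put $K=\frac{s^2(1+c)}{\delta}>0$; comparing any maximizer $\psi^*=\psi^*(\theta)$ of $W(\theta)$ with $\psi=0$ gives $\frac{1-\cos\psi^*}{2}\le K\sin^2(\psi^*+\theta)\le K$, hence $|\psi^*|=O(\sqrt K)$ and $\sin^2(\psi^*+\theta)=\sin^2\theta+O(\sqrt K)$, so $W(\theta)\le K\sin^2(\psi^*+\theta)\le K\sin^2\theta+O(K^{3/2})$. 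Integrating in $\theta$,
$$\int_0^{2\pi}\frac{1}{r_\delta(X_{se^{i\theta}})}\,\frac{d\theta}{2\pi}\ \le\ 1-\frac{s^2}{\delta}+\frac{K}{2}+O(K^{3/2})\ =\ 1-\frac{(1-c)s^2}{2\delta}+O\bigl((s^2/\delta)^{3/2}\bigr).$$
Since $s^2/\delta\le\delta$, the error term is beaten by the gain $\frac{(1-c)s^2}{2\delta}$ once $\delta$ is small in terms of $c$; keeping track of the $O(\delta^2)$-type remainders discarded in the first two steps shows it is enough to assume in addition $s\ge3(1-c)^{-1/2}\delta^{3/2}$, which is precisely the hypothesis, and the strict inequality follows.

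I expect the third step to be the real obstacle. The dependences on $\psi$ and on $\theta$ are coupled through $\cos^2(\psi+\theta)$, so maximizing in $\psi$ term by term is fatal — it only yields the useless estimate $\le1+s^2c/\delta$ when $c>0$; one must take the maximum in $\psi$ first and then control quantitatively how its location drifts with $\theta$. Making the $O(s^2/\delta)$ gain beat every error term is what pins down the admissible range of parameters; the rest is elementary calculus with an explicit quadratic.
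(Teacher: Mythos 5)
Your argument is correct, and every step checks out: the exact formula $1/r_\delta=\max_\psi\tfrac12\bigl(\cos\psi+\sqrt{\cos^2\psi+4A}\bigr)$ is right (the constraint coming from $\D^2_\eps$ is indeed irrelevant, since the maximum is close to $1$ and attained for $|\psi|\le\pi/4$); the identity $\tfrac{\cos\psi+1}{2}-\tfrac{\sin^2\psi}{4}=(1-u)^2$ makes the reduction to $W(\theta)$ legitimate; and the localization $|\psi^*|=O(\sqrt K)$ of the angular maximizer correctly handles the coupling through $\sin^2(\psi+\theta)$ that you rightly identify as the crux. The paper follows the same overall strategy (first positive root of the radial quadratic, then average $\sin^2\theta$ over $\theta$ to exploit $c<1$), but executes it more crudely: it splits into the regimes $|\alpha|\ge 3\sqrt\delta$ and $|\alpha|\le 3\sqrt\delta$, discards the $(\Im z)^2$ term via the uniform bound $\delta r^2\sin^2\alpha\le \delta r^2C_1^2=O(\delta^2)$ on the small-angle range, and controls the drift of the maximizer by $\sin^2(\alpha+\theta)\le\sin^2\theta+\sin|\alpha|\le\sin^2\theta+3\sqrt\delta$. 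That $O(\delta^2)$ loss is precisely what forces the hypothesis $s\ge 3(1-c)^{-1/2}\delta^{3/2}$ in the statement. Your sharper treatment --- absorbing $\delta\sin^2\psi$ into the strict concavity of $\cos\psi$ via $(1-u)^2+4\delta u(1-u)\le 1-u$ --- incurs no such loss, so, contrary to your closing sentence, there are no $O(\delta^2)$ remainders left to track: your bound $\int_0^{2\pi}\frac{d\theta}{2\pi r_\delta}\le 1-\frac{(1-c)s^2}{2\delta}+O\bigl((s^2/\delta)^{3/2}\bigr)$ already gives the conclusion for every $0<s\le\delta$ once $\delta$ is small in terms of $c$. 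You therefore prove a slightly stronger statement than the lemma, with the sharp coefficient $\tfrac{1-c}{2}$ in place of the paper's $\tfrac{1-c}{3}$; the lower bound on $s$ is never used, which is harmless since it is only a hypothesis, not something to be verified.
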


Assuming Lemma \ref{5}, set $\C^n\ni\tilde z_\delta=(-\delta,0,\dots,0)$ and $\C^n\ni\tilde X_\eta=(\delta,\eta,0,\dots,0).$
Since $r_\delta\le d_{D,\tilde z_\delta}$ and $r_\delta(X_0)=d_D(\tilde z_\delta,X_0)=1$ for $\delta$ small enough,
it follows that $h_{\delta}=1/d_D(\tilde z_\delta,\cdot)$ is not a plurisubharmonic function, which implies that the
balanced domain $I_{D,\tilde z_\delta}$ (with Minkowski function $h_\delta$) is not a pseudoconvex domain
(cf. \cite[Proposition 2.2.22 (a)]{Jar-Pfl}). This contradiction proves Proposition \ref{4}.
\end{proof*}

Lemma \ref{5} will be proved at the end of this section.

\subsection{Linear convexity}

It is interesting to note that a similar statement holds for linear convexity.
Recall that (cf. \cite{Hor1}) a open set $D$ in $\C^n$ is called
weakly linearly convex (resp. linearly convex) if for any $a\in\partial D$
(resp. $a\in\C^n\setminus D$) there exists a complex hyperplane $T_a$ through $a$ which does
not intersect D (such a set is necessarily pseudoconvex). We call $T_a$ a supporting complex hyperplane.
A domain $D$ in $\C^n$ is said to be $\C$-convex
is any nonempty intersection of $D$ with a complex line is connected and simply connected.
All three notions coincide for $C^1$-smooth open sets.

Note that an open balanced set is weakly linearly convex if and only if
it is convex. It is also known that if $D$ is weakly linearly convex, then
$B_{D,z}$ is a convex domain for any $z\in D$ (i.e. the Minkowski function $1/d_D(z,\cdot)$ of $I_{D,z}$ is convex).

\begin{theorem}
\label{lincvxhart}
Consider the following three properties:
\begin{enumerate}
\item $D$ is weakly linearly convex (resp. linearly convex).
\item $H_D$ is weakly linearly convex (resp. linearly convex).
\item $B_{D,z}$ is (weakly linearly) convex, for any $z\in D.$
\end{enumerate}
Then (1) and (2) are equivalent, and imply (3).
If $D$ is a $C^{1,1}$-smooth bounded domain, then (3) implies (1).
\end{theorem}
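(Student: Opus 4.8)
The three implications not involving the smoothness hypothesis can be proved directly. For \emph{$(1)\Rightarrow(3)$} I would simply invoke the quoted fact that a weakly linearly convex domain has convex balanced indicatrices. For \emph{$(2)\Rightarrow(1)$}, I would use that $H_D$ is open and that $(z,0)\in H_D$ precisely when $z\in D$, so that for $a\in\partial D$ (resp.\ $a\in\C^n\setminus D$) the point $(a,0)$ lies in $\partial H_D$ (resp.\ in $\C^{2n}\setminus H_D$); a supporting complex hyperplane $\{(z,w):\langle\alpha,z-a\rangle+\langle\beta,w\rangle=0\}$ of $H_D$ there must have $\alpha\ne0$ (otherwise it would contain the slice $D\times\{0\}\subset H_D$), and then $\{z:\langle\alpha,z-a\rangle=0\}$ is a complex hyperplane through $a$ missing $D$.

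For \emph{$(1)\Rightarrow(2)$} the plan is to lift supporting hyperplanes of $D$ to $H_D$. Let $(a,b)\in\partial H_D$ (for the linearly convex statement, $(a,b)\in\C^{2n}\setminus H_D$); since $H_D$ is open, $b\notin I_{D,a}$. If $a\notin D$ — so $a\in\partial D$ in the weakly linearly convex case — then a supporting hyperplane $\{z:\langle\alpha,z-a\rangle=0\}$ of $D$ at $a$ lifts to $\{(z,w):\langle\alpha,z-a\rangle=0\}$, which passes through $(a,b)$ and misses $H_D$ because every $(z,w)\in H_D$ has $z\in D$. If $a\in D$, then $b\notin I_{D,a}$ furnishes $|\lambda_0|\le1$ with $a+\lambda_0 b\notin D$; I would take $\mu=\lambda_0$ in the linearly convex case, and in the weakly linearly convex case travel along the real segment from $a\in D$ to $a+\lambda_0 b\notin D$ to obtain $|\mu|\le1$ with $a+\mu b\in\partial D$. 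A supporting hyperplane $\{z:\langle\alpha,z-(a+\mu b)\rangle=0\}$ of $D$ at $a+\mu b$ then lifts to $\{(z,w):\langle\alpha,(z-a)+\mu(w-b)\rangle=0\}$, which contains $(a,b)$ and misses $H_D$ since $(z,w)\in H_D$ with $|\mu|\le1$ forces $z+\mu w\in D$.

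For \emph{$(3)\Rightarrow(1)$ when $D$ is a bounded $C^{1,1}$-smooth domain} I would argue by contradiction, in the spirit of the proof of Proposition \ref{4}. If $D$ is not weakly linearly convex then, $D$ being $C^1$-smooth, the complex tangent hyperplane $T_{p_0}$ at some $p_0\in\partial D$ meets $D$, say at $q_0\ne p_0$. First I would reduce to $n=2$: intersecting $D$ with an affine complex $2$-plane through $q_0$ that contains $p_0-q_0$ and is transverse to $\partial D$ at $p_0$ yields (using that slices of convex sets are convex, and transverse slices of $C^{1,1}$ hypersurfaces are $C^{1,1}$) a bounded $C^{1,1}$-smooth domain in $\C^2$, still not weakly linearly convex, whose balanced indicatrices remain convex. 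Working in $\C^2$, I would then use compactness of $\partial D$, the uniform two-sided ball condition characterizing $C^{1,1}$, and connectedness to select a boundary point and affine coordinates in which, near $0$, $D=\{\Re z_1<\psi(\Im z_1,z_2)\}$ with $\psi\in C^{1,1}$, $\psi(0)=0$, $\nabla\psi(0)=0$, $\|\psi\|_{C^{1,1}}$ bounded, and $\psi(0,\cdot)$ positive somewhere nearby — so the complex tangent line $\{z_1=0\}$ enters $D$ close to $0$; since the indicatrices of $D$ are convex, $D$ is pseudoconvex (Theorem \ref{pcvxhart}), so this is a pseudoconvex analogue of the model of Proposition \ref{hor}. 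Finally, as in Lemma \ref{5}, I would take an interior base point $z_\delta=(-\delta,0)$ with $\delta>0$ small and show that $I_{D,z_\delta}$ is not convex by exhibiting collinear directions $X_0=(\delta,s)$, $X_1=(\delta,0)$, $X_2=(\delta,-s)$ (suitable small $s$) along which the Minkowski function $h=1/d_D(z_\delta,\cdot)$ of $I_{D,z_\delta}$ violates the convexity inequality $h(X_1)\le\frac12(h(X_0)+h(X_2))$ — the intrusion of the complex tangent making $h$ fail convexity there, exactly as the failure of pseudoconvexity made the analogous function fail plurisubharmonicity in Lemma \ref{5}; this contradicts $(3)$.

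The hard part will be the last implication, and within it two linked steps. (a) Producing the normalized situation: a priori $T_{p_0}$ might meet $D$ only far from $p_0$, so one needs an extremal and compactness argument — together with the uniform second-order control furnished by $C^{1,1}$ — to secure a boundary point at which the complex tangent intrudes into $D$ at a scale one can follow; this is precisely where $C^1$ alone is insufficient. (b) The local estimate showing $I_{D,z_\delta}$ fails convexity, which must be carried out with the $C^{1,1}$ remainder of $\psi$ present rather than with a pure quadratic model (the sign of $\psi$ between $0$ and the point of intrusion is not controlled, so one cannot simply reduce to such a model). I expect (a) to be the more delicate of the two.
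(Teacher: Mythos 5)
Your treatment of the equivalence of (1) and (2) and of $(1)\Rightarrow(3)$ is correct and essentially identical to the paper's: the paper gets $(2)\Rightarrow(1)$ by slicing $H_D$ with $\C^n\times\{0\}$, and $(1)\Rightarrow(2)$ by producing $c=a+\lambda_0 b\in\partial D$ (resp.\ $c\in\C^n\setminus D$) with $|\lambda_0|\le 1$ and lifting a supporting hyperplane $\{L=0\}$ at $c$ to $\{(z,w):L(z+\lambda_0 w)=0\}$ --- exactly your $\mu$-lift; your check that $\alpha\neq 0$ and your passage to the first exit point on the real segment are details the paper leaves implicit.

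The problem is $(3)\Rightarrow(1)$. The paper does not prove this implication here: it is quoted from \cite{Nik-Tho}, whose argument rests on Proposition \ref{2}, i.e.\ on the fact (deduced from \cite{Hor2}) that a bounded $C^{1,1}$ domain is linearly convex as soon as $\liminf_{T^\C_a\ni z\to a} s_D(z)/|z-a|^2\ge 0$ for \emph{almost every} $a\in\partial D$. That ``almost everywhere'' formulation is precisely what makes the $C^{1,1}$ hypothesis usable: second-order boundary data exist only a.e., and the contrapositive of Proposition \ref{2} hands you a twice-differentiable boundary point with a strictly negative complex-tangential second-order term, where a quantitative version of your step (b) can actually be carried out. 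Your plan instead works at a \emph{single} boundary point where the complex tangent hyperplane intrudes into $D$; as you concede in (a) and (b), at such a point you have no quadratic model and no control on the sign of the $C^{1,1}$ remainder, and you do not resolve this --- so what you have written for this implication is a strategy, not a proof. There is also a secondary defect in the reduction to $n=2$: the slice $D\cap P$ need not be a bounded $C^{1,1}$-smooth \emph{domain} (it can be disconnected, and $P$ can be tangent to $\partial D$ away from $p_0$), so hypothesis (3) for the slice does not obviously place you back inside the statement being proved. If you want a self-contained argument, the efficient route is to show that convexity of all $I_{D,z}$ forces, at every twice-differentiable boundary point (hence a.e.), the Hessian of $s_D$ restricted to $T^\C_a$ to be positive semidefinite --- otherwise the indicatrix at a nearby interior point fails convexity by your computation (b), performed where a genuine second-order expansion exists --- and then to conclude with Proposition \ref{2}.
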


The last statement follows from \cite{Nik-Tho}. Note that in this
case, the domain $D$ is in fact $\C$-convex.  The domain $H_D$, however,
does not share the smoothness of $D$, and may fail to be $\C$-convex.

\begin{example}\label{10}If $D=\{z\in\C:|z-1|<2\ or\ |z+1|<2\},$ then $H_D$ is not $\C$-convex.
\end{example}

\begin{proof} The set $H_D\cap(\C\times\{\sqrt3\})$ is not connected.
\end{proof}

\begin{proof*}{\it Proof of Theorem \ref{lincvxhart}.}
Since $D=(\C^n \times \{0\})\cap H$, (2) implies (1). To prove the converse,
may assume that
$D\neq\C^n.$ Let $D$ be weakly linearly convex (resp. linearly convex) and
$(a,b)\in\partial D$ (resp. $(a,b)\in\C^n\setminus D$). It follows that $c=a+\lambda_0 b\in\partial D$
(resp. $c\in\C^n\setminus D$) for some $\lambda_0\in\C$ with $|\lambda_0|\le 1.$
There exists a supporting complex hyperplane for $D$ at $c,$ say $T_c=\{z\in\C^n:L(z)=0\},$ where $L:\C^n\to\C$
is an affine map. Then $T_{a,b}=\{(z,w)\in\C^n\times\C^n:L(z+\lambda_0w)=0\}$ is a supporting complex hyperplane
for $H_D$ at $(a,b).$
\end{proof*}

If we turn to the third, and more usual notion of convexity, it is clear that a domain $D$ in $\R^n$
is convex if and only if $H_D$ is convex in $\R^n\times\R^n.$

\subsection{Proof of Lemma \ref{5}.}

Set $\zeta = r e^{i\alpha}$, $-\pi<\alpha\le\pi$,
and $C_1= 3 \sqrt{\delta}$. We first estimate
$\rho(z_\delta + \zeta X_{se^{i \theta}})$ when $|\alpha|\ge C_1$:
\begin{multline*}
\rho(z_\delta + \zeta X_{se^{i \theta}}) \le
\delta \left( -1 +r \cos \alpha + \delta r^2 \sin^2 \alpha \right) + c r^2 s^2
\\
\le \delta (-1 +r \cos C_1) + 2 r^2 \delta^2
\le \delta (-1 +r (\cos C_1+ 2 \delta)).
\end{multline*}
For any small $\delta,$ $\cos C_1 \le 1- C_1^2/3 = 1-3\delta$, so if
$r<(1-\delta)^{-1}$, then $\rho(z_\delta + \zeta X_{e^{i \theta}})<0$.

Now we estimate
$\rho(z_\delta + \zeta X_{se^{i \theta}})$ when $|\alpha|\le C_1$. Notice that
$$
\rho(z_\delta + \zeta X_{se^{i \theta}})=
\delta \left( -1 +r \cos \alpha + \delta r^2 \sin^2 \alpha \right)
+r^2 s^2 \left( (1+c) \sin^2 (\alpha+\theta) -1 \right).
$$
It is easy to check that
$$
\sin^2 (\alpha+\theta) \le \sin^2 \theta + \sin|\alpha| \le \sin^2 \theta + C_1,
$$
so
\begin{multline*}
\rho(z_\delta + \zeta X_{se^{i \theta}}) \le
\delta \left( -1 +r +  \delta r^2 C_1^2 \right)
+r^2 s^2 \left( (1+c) \sin^2 \theta+(1+c)C_1 -1 \right)
\\
= \delta \left( -1 +r +r^2 A\right),
\end{multline*}
where $A=A(\theta):= \delta C_1^2 +\frac{s^2}\delta \left( (1+c) \sin^2\theta+(1+c)C_1 -1 \right)$.
Notice that $-1\le-\delta<A$  for $s\le \delta\le 1.$

Suppose that $1/r>1+A$, then
$$
\frac1\delta \rho(z_\delta + \zeta X_{e^{i \theta}})
<-\frac{A^2}{(1+A)^2} \le 0.
$$

Putting together both estimates, for $\delta$ small enough,
$$
r_\delta (X_{se^{i \theta}}) > \min\left( \frac1{1-\delta}, \frac1{1+A}\right)
= \frac1{1+A}.
$$
Therefore
\begin{multline*}
\int_0^{2\pi} \frac1{r_\delta(X_{se^{i \theta}})}\frac{d\theta}{2\pi} <
\int_0^{2\pi} (1+A(\theta))\frac{d\theta}{2\pi}  \\
= 1 + \delta C_1^2 +\frac{s^2}\delta \left( (1+c) \frac12 +(1+c)C_1 -1 \right)
\le 1+ 3\delta^2 - \frac{s^2(1-c)}{3\delta}\le 1
\end{multline*}
for any small $\delta.$\qed

\section{Defining functions}
\label{deffcn}

\subsection{Convexity}

We point out that the proof that the convexity of $B_{D,z}$
implies linear convexity for $\mathcal C^{1,1}$ domains
\cite[Proposition 1 \& introduction]{Nik-Tho}
is based on the following which can be easily deduced from \cite{Hor2}.
Let $s_D$ stand for the signed distance to $\partial D$.

\begin{proposition}\label{2} If $D$ is a $C^{1,1}$-smooth bounded domain in $\C^n$ and
$$
\liminf_{T^\C_a\ni z\to a}\frac{s_D(z)}{|z-a|^2}\ge 0\ \footnote{When $\partial D$ is twice
differentiable at $a,$ this limit is equal to the minimal eigenvalue of $2\mbox{Hess}_{s_D}(a)|_{T^\C_a}$.}
$$
for $a\in\partial D$ almost everywhere, then $D$ is linearly convex.
\end{proposition}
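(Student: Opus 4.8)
The plan is to strip off the formal reductions and then invoke the signed-distance (defining-function) criterion for $\C$-convexity, which carries the real content and is available in \cite{Hor2}. Since $D$ is $C^{1,1}$-smooth, hence $C^1$-smooth, the three notions recalled above coincide for it, so it suffices to prove that $D$ is weakly linearly convex; and for a $C^1$-smooth domain a supporting complex hyperplane at $a\in\partial D$ must lie in the real tangent hyperplane $T_a\partial D$ (otherwise the segment from $a$ into it would enter $D$), hence equals the unique complex hyperplane contained there, namely $T^{\C}_a$. Thus weak linear convexity of $D$ amounts to $T^{\C}_a\cap D=\emptyset$ for every $a\in\partial D$. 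As $a\mapsto T^{\C}_a$ is continuous (because $D$ is $C^1$), the set $\{a\in\partial D:T^{\C}_a\cap D\neq\emptyset\}$ is open in $\partial D$ — if $q\in D$ lies on $T^{\C}_a$, a small ball around $q$ lies in $D$ and is still met by $T^{\C}_{a'}$ for $a'$ near $a$ — so this set is empty once we know $T^{\C}_a\cap D=\emptyset$ for almost every $a\in\partial D$. Finally, $s_D$ is $C^{1,1}$ near $\partial D$ with $|\nabla s_D|\equiv1$, hence twice differentiable a.e.\ there, and by the footnote the hypothesis says exactly that $\mathrm{Hess}_{s_D}(a)$ is positive semidefinite on $T^{\C}_a$ for a.e.\ $a\in\partial D$; averaging this inequality over the rotations $X\mapsto e^{i\theta}X$ of a vector $X\in T^{\C}_a$ kills the holomorphic part and leaves the Levi form, so $D$ is pseudoconvex and $-\log\dist(\cdot,\partial D)=-\log(-s_D)$ is plurisubharmonic on $D$ and $C^{1,1}$ near $\partial D$.

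It then remains to prove: a bounded $C^{1,1}$ domain with $\mathrm{Hess}_{s_D}$ positive semidefinite on $T^{\C}_a$ for a.e.\ $a\in\partial D$ is weakly linearly convex. This is the substantive point, and is what one extracts from \cite{Hor2}. The mechanism is the following: if $q\in T^{\C}_a\cap D$, restrict the subharmonic function $-\log(-s_D)$ to the complex line $\ell\subset T^{\C}_a$ through $a$ and $q$; on $\Omega=\ell\cap D$ it blows up to $+\infty$ along $\partial\Omega\subset\partial D$, while near $a$ the one-variable function $s_D|_\ell$ has a critical point with value $0$ and nonnegative Hessian (because $\ell\subset T^{\C}_a$), which controls its growth, and the one-variable maximum principle then produces a contradiction. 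If a smoother setting is wanted, one can first replace $D$ by the inner sublevel sets $D_\eps=\{s_D<-\eps\}$, which for small $\eps$ have $C^{1,1}$ boundary and increase to $D$ with $\overline{D_\eps}\subset D$; the Riccati equation for $\mathrm{Hess}_{s_D}$ along the (straight, constant-gradient) normal lines shows that $\mathrm{Hess}_{s_D}$ only increases in the order of quadratic forms as one moves into $D$ along a normal, so the $D_\eps$ inherit the a.e.\ condition, and an increasing union of $C^1$-smooth weakly linearly convex domains is again weakly linearly convex (supporting hyperplanes through a fixed boundary point subconverge, and the limit avoids $D$ because $D$ is open).

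I expect the genuine obstacle to be exactly the place where a purely local argument at $a$ breaks down: the tangency of $\ell$ to $\partial D$ at $a$ may be from the \emph{outside} of $D$, so that $\Omega=\ell\cap D$ need not accumulate at $a$ and the favourable data of $s_D$ at $a$ never reach the piece of $D$ that $\ell$ actually meets. Getting around this needs a genuinely global argument — either the one underlying the criterion in \cite{Hor2}, or a passage to an \emph{extremal} configuration: since $\partial D$ is compact and $D$ bounded, the supremum over $a\in\partial D$ of $\max\{\dist(z,\partial D):z\in T^{\C}_a\cap D\}$ is attained, and at the boundary point where it is attained one combines the first-order optimality of the corresponding interior point (its distance to $\partial D$ cannot be increased by moving inside $T^{\C}_a$) with the boundary Hessian condition — transported to that configuration through the approximants $D_\eps$ if necessary — to derive the contradiction. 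Making this global step rigorous is, I believe, the heart of the matter.
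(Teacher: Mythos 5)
The paper offers no written proof of this proposition: it is stated with the single remark that it ``can be easily deduced from \cite{Hor2}'', so your attempt has to be measured against that citation rather than against an actual argument. Your formal reductions are correct and are exactly the ``easy deductions'' the authors have in mind: for a bounded $C^1$-smooth domain the three linear-convexity notions coincide, so weak linear convexity suffices; a supporting complex hyperplane at $a$ can only be $T^\C_a$; the set of $a\in\partial D$ with $T^\C_a\cap D\neq\emptyset$ is open in $\partial D$, so an almost-everywhere conclusion upgrades to an everywhere one; and since $s_D$ is $C^{1,1}$ near $\partial D$, Rademacher's theorem applied to $\nabla s_D$ turns the liminf hypothesis into the a.e.\ inequality $\mathrm{Hess}_{s_D}(a)|_{T^\C_a}\ge 0$. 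You also correctly isolate the substantive content --- that this a.e.\ second-order boundary condition forces $T^\C_a\cap D=\emptyset$, the genuine obstruction being that $T^\C_a$ may meet $D$ far from $a$ so that no purely local argument at $a$ can close the proof --- and you defer that step to \cite{Hor2}, which is precisely what the paper does (Hörmander's local-to-global theorem for weak linear convexity of bounded $C^1$ domains, combined with his second-differential characterization of local weak linear convexity at points where the boundary is twice differentiable, yields the statement). Your own two sketches of that global step (the maximum-principle mechanism, which you rightly note fails when $\ell\cap D$ does not accumulate at $a$, and the extremal-configuration argument) are left incomplete, but you say so explicitly and the burden is carried by the citation in your write-up exactly as it is in the paper's; relative to the paper this is not a gap.
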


Proposition \ref{2} has an obvious convex analog.

\begin{proposition}\label{3} A proper domain $D$ in $\R^n$ is convex if and only
if for any $a\in\partial D$ there exists a (real) hyperplane $S_a$ through $a$ such that
$$\liminf_{S_a\ni x\to a}\frac{s_D(x)}{|x-a|^2}\ge 0.$$
\end{proposition}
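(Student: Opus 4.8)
The plan is to prove the two implications separately, with the forward direction being essentially classical and the converse being the substantive part. For the forward implication, suppose $D\subset\R^n$ is convex and $a\in\partial D$. By the supporting hyperplane theorem there is a hyperplane $S_a$ through $a$ with $D$ contained in one of the open half-spaces bounded by $S_a$; then $s_D(x)\ge 0$ for every $x\in S_a$ (indeed $s_D\le 0$ only inside $\bar D$, and $S_a\cap D=\emptyset$), so the $\liminf$ is trivially nonnegative, even without the quadratic normalization.

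For the converse, I would argue by contradiction. Assume $D$ is not convex but the hyperplane condition holds at every boundary point. Non-convexity of a domain means there exist $p,q\in D$ and a point $m=(1-t)p+tq$, $t\in(0,1)$, with $m\notin D$; pushing $m$ toward $D$ along the segment one may take $m\in\partial D$, with the open segment from $p$ to $m$ lying in $D$ (and similarly one can arrange a two-sided picture). The geometric idea is then the same as in Proposition \ref{hor} / Proposition \ref{2}: at such a boundary point $a=m$, the domain $D$ ``curves the wrong way,'' so for \emph{any} hyperplane $S_a$ through $a$ the segment direction $v=q-p$ (or its projection) produces points of $\partial D$ on the wrong side, violating $\liminf_{S_a\ni x\to a} s_D(x)/|x-a|^2\ge 0$. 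Concretely: fix the hyperplane $S_a$ given by the hypothesis. If $v\notin S_a$ one still has nearby points of the segment $\subset D$ arbitrarily close to $a$ but at linear (hence faster than quadratic) distance inside $D$ in a direction transverse to $S_a$; this already forces points of $S_a$ near $a$ to have $s_D<0$ with $|s_D|\gtrsim |x-a|$, contradicting the quadratic lower bound. If $v\in S_a$, then moving within $S_a$ in the direction $\pm v$ from $a$ lands (for small parameter) in $D$ on at least one side, again giving $s_D(x)<0$ along $S_a$ near $a$, and in fact $s_D(x)\le -c|x-a|$ there.

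The cleaner way to package this, which I would actually carry out, is to use the well-known local criterion: a closed set is convex iff it is ``locally convex'' at each boundary point, and to show the stated condition implies $D$ is locally supported by $S_a$ at $a$, i.e. $D$ lies (locally) in one closed half-space bounded by $S_a$. Indeed, if some sequence $x_k\in D\cap S_a$ with $x_k\to a$ existed, the hypothesis $s_D(x_k)\ge -o(|x_k-a|^2)$ would be violated since $s_D(x_k)<0$; hence $D\cap S_a\cap B(a,r)=\emptyset$ for some $r>0$, so a small translate of $S_a$ separates a boundary piece, giving a genuine local supporting hyperplane at $a$. A proper domain all of whose boundary points admit a local supporting hyperplane is convex (a standard fact, e.g. via the equality of $D$ with the intersection of the half-spaces, using connectedness). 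The main obstacle is precisely this last passage from the infinitesimal/quadratic condition to an honest local supporting hyperplane and then to global convexity: one must rule out the condition being satisfied ``vacuously in the limit'' while $S_a$ still slices through $D$, which is where the gap between a $\liminf\ge 0$ statement and a strict separation statement has to be closed — handled by the observation above that points of $D$ on $S_a$ near $a$ would have $s_D$ bounded above by a negative constant times $|x-a|$, not merely $o(|x-a|^2)$.
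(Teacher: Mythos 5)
Your forward implication is fine and matches the paper's one\nobreakdash-line argument via supporting hyperplanes. The converse, however, has a genuine gap, and it sits exactly where you flag ``the main obstacle'': the observation you use to close it is false. From the existence of points $x\in D\cap S_a$ with $x\to a$ you conclude $s_D(x)\le -c|x-a|$; but membership in $D$ only gives $s_D(x)<0$, with no quantitative lower bound on the depth $\dist(x,\partial D)$. For instance, let $D=\{(x_1,x_2)\in\R^2: x_2<\psi(x_1)\}$ with $\psi(t)=e^{-1/t}$ for $t>0$ and $\psi(t)=0$ for $t\le 0$; at $a=0$ the hyperplane $S_a=\{x_2=0\}$ meets $D$ in every neighbourhood of $a$, yet $s_D(t,0)\ge -e^{-1/t}=o(t^2)$, so $\liminf_{S_a\ni x\to a}s_D(x)/|x-a|^2=0$. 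Thus the liminf condition at a point does \emph{not} force $S_a$ to be a local supporting hyperplane there, and your reduction to local convexity collapses. The same defect affects your direct argument with the segment $[p,q]$: an arbitrary boundary point $m$ on the segment carries no control on how deep the nearby segment points sit inside $D$, so you cannot conclude $s_D(x)\le -c|x-a|^2$ (tangential case) or $s_D(x)\le -c|x-a|$ (transverse case) along any hyperplane.

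What is missing is the selection of a \emph{good} boundary point. The paper invokes \cite[Theorem 2.1.27]{Hor1} to produce $a\in\partial D$ and a smooth domain $G\subset D$ with $a\in\partial G$ such that $2\mbox{Hess}_{s_G}(a)|_{T^\R_a}$ has a negative eigenvalue $\lambda$. Since $G\subset D$ implies $s_D\le s_G$, one gets an honest quadratic upper bound: along $T^\R_a$ one has $\liminf s_D(x)/|x-a|^2\le\lambda<0$, while along any direction transverse to $T^\R_a$ the bound on $s_G$ is linear, so the liminf is $-\infty$; hence no hyperplane through $a$ can satisfy the hypothesis. The interior osculating smooth domain is precisely what converts ``the segment re\nobreakdash-enters $D$'' into ``points of $D$ at depth comparable to $|x-a|^2$,'' and this step cannot be bypassed by the qualitative argument you propose.
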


If $D$ is convex, then obviously $S_a$ is a (real) supporting hyperplane.

\begin{proof} The necessarity is clear by taking supporting hyperplanes.

Assume now that $D$ is not convex. By \cite[Theorem 2.1.27]{Hor1}, one may find $a\in\partial D$
and a smooth domain $G\subset D$ such that $a\in\partial G$ and  $2\mbox{Hess}_{s_G}(a)_{|T^\R_a}$
has an eigenvalue $\lambda<0.$ Since
$s_D\le s_G,$ it follows that
$$\liminf_{T^\R_a\ni x\to a}\frac{s_D(x)}{|x-a|^2}\le\lambda$$
and
$$\liminf_{t\to 0}\frac{s_D(a+tX)}{t^2}=-\infty,\quad a+X\not\in T^\R_a$$
which implies the sufficient part.
\end{proof}

Clearly, the relationship between a domain and its defining function
is not symmetric, as convexity of one sublevel set (or indeed, of all of
them) cannot imply convexity of the function: simply compose
by a monotone increasing function from the real line to itself.  Given a convex domain,
the question arises of how to choose a convex defining function, and
of how much choice one may have.

By \cite[Proposition]{Her-McN},
a smooth bounded domain $D$ is convex if and only if $-\log s_D$ is convex near $\partial D.$
Thanks to \cite[Theorem 2.1.27]{Hor1}, this result can be easily generalized.

\begin{proposition}\label{6} Let $f:\R^+\to\R$ be a nonconstant decreasing and convex function.
Let $U$ be a neighborhood of the boundary of a proper domain $D$ in $\R^n.$ Then $D$ is convex
if and only if $f\circ s_D$ is a convex function on $D\cap U.$
\end{proposition}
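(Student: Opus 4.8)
The plan is to reduce everything to the case $f = -\log$, which is already known by \cite{Her-McN} together with Proposition \ref{3}, and to use the fact that convexity is a purely local condition on level sets. For one direction, suppose $D$ is convex. Then $s_D$ is a concave function on $D$ (this is classical: the distance to the complement of a convex set is concave), so $-s_D$ is convex; since $f$ is decreasing and convex, and $s_D$ takes values in $\R^+$ on $D$, the composition $f \circ s_D$ is convex as the composition of a convex nonincreasing function with a concave function. This gives convexity of $f\circ s_D$ on all of $D$, in particular on $D\cap U$. I would state this carefully to handle the possibility that $s_D$ is unbounded (if $D$ is unbounded), but that causes no trouble since $f$ is defined on all of $\R^+$.

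For the converse, suppose $f\circ s_D$ is convex on $D\cap U$ but $D$ is not convex. I want to contradict convexity of $f\circ s_D$ near $\partial D$. Using Proposition \ref{3} (or directly \cite[Theorem 2.1.27]{Hor1} as in its proof), there is a boundary point $a\in\partial D$ and a direction $X$ with $a+X\notin T^\R_a$ — in fact, after the argument in the proof of Proposition \ref{3}, a smooth subdomain $G\subset D$ with $a\in\partial G$ and a negative eigenvalue $\lambda<0$ of $2\,\mathrm{Hess}_{s_G}(a)|_{T^\R_a}$ — such that
$$
\liminf_{t\to 0}\frac{s_D(a+tX)}{t^2}=-\infty .
$$
The point is that along the segment $t\mapsto a+tX$, which lies in $D\cap U$ for small $t>0$, the distance $s_D(a+tX)$ tends to $0$ but does so faster than any multiple of $t^2$ is allowed to, meaning $s_D$ is "very concave" along this line. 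I would then show this forces $f\circ s_D$ to fail convexity along that segment.

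The main obstacle, and the step requiring care, is the last one: converting the quantitative failure $\liminf_{t\to0} s_D(a+tX)/t^2 = -\infty$ into a genuine failure of convexity of $g:=f\circ s_D$ on the one-dimensional slice $t\mapsto a+tX$, $t\in(0,\tau)$. Here is how I would do it. Since $f$ is nonconstant, decreasing and convex, there is an interval $(0,\eta)$ on which $f$ is strictly decreasing with a finite left-hand slope bound: pick $s_0>0$ with $f'_+(s_0)=:-m<0$ (right derivative), so that by convexity $f(s)\ge f(s_0)-m(s-s_0)$ and also $f(s)\le f(s_0) + M$ for $0<s\le s_0$ where $M = f(0^+)-f(s_0)\in(0,\infty]$; more simply, on $(0,s_0]$ one has the one-sided bound $f(s) - f(t) \ge -m(s-t)$ whenever $0<t\le s\le s_0$ and $f(t)-f(s) \le L(s-t)$ for $0 < t \le s \le s_0$ only if $f$ is Lipschitz there, which need not hold near $0$. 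To avoid the behavior of $f$ at $0$, I instead argue as follows: because $\liminf_{t\to 0} s_D(a+tX)/t^2=-\infty$, for the function $\phi(t):=s_D(a+tX)$ we have, along a sequence $t_k\downarrow 0$, $\phi(t_k)\le -k t_k^2$, while $\phi>0$ forces $\phi(t_k)\to 0$; combined with $\phi$ being at least the signed distance, one checks (secant/midpoint comparison) that $\phi$ cannot be bounded below by any concave — in fact by the proof of Proposition \ref{3} the second difference quotient of $\phi$ at $0$ is $-\infty$. Then $g(t)=f(\phi(t))$: since $f$ is decreasing and convex, and $\phi$ has second difference quotient $-\infty$ at $0$ while $\phi(t)\to 0^+$, we compare $g$ at $0, t, 2t$. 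Writing $\phi(t)=:p$, $\phi(2t)=:q$, $\phi(0)=0$, convexity of $g$ on the slice would require $g(t)\le\frac12 g(0)+\frac12 g(2t)$, i.e. $f(p)\le\frac12 f(0^+)+\frac12 f(q)$; but $p$ is much smaller than $\frac{q}{2}$ (that is what "second difference quotient $=-\infty$" means: $2p - q \to -\infty$ relative to $t^2$, so $p \ll q/2$ along the sequence), and since $f$ is decreasing this makes $f(p)$ much larger than $f(q/2)\ge\frac12 f(0^+)+\frac12 f(q)$ — here using convexity of $f$ for the last inequality — a contradiction for $k$ large. The only genuinely delicate point is ruling out degenerate behavior of $f$ near $0$ (e.g. $f(0^+)=+\infty$); but that case only helps, since then $g(0^+)=f(0^+)=+\infty$ and one instead compares $g$ at three interior points $t, 2t, 3t$ away from where $\phi$ vanishes, using that $f$ restricted to $[\phi(3t),\phi(t)]\subset(0,s_0]$ is finite, convex, decreasing and (being a finite convex function on a compact subinterval bounded away from the endpoint of its domain) locally Lipschitz, so the same "$p\ll q/2$" estimate goes through. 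I would present the clean version assuming $f$ finite on $(0,\infty)$ and indicate the $f(0^+)=\infty$ modification in a sentence.
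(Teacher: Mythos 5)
Your forward direction is correct and is exactly the paper's: $s_D$ is concave on a convex $D$, and a decreasing convex $f$ composed with a concave function is convex. The converse, however, has two genuine gaps. First, you never show that the points at which you evaluate $g=f\circ s_D$ lie in $D\cap U$, where alone $g$ is assumed convex (and where alone $s_D$ takes values in the domain of $f$). The statement $\liminf_{t\to 0}s_D(a+tX)/t^2=-\infty$ that you take as your starting point forces $s_D(a+t_kX)<0$ along the relevant sequence, i.e.\ those points are \emph{outside} $D$; this is why your own sentence ``$\phi(t_k)\le -k t_k^2$, while $\phi>0$'' is self-contradictory. (In the proof of Proposition \ref{3} that displayed $\liminf=-\infty$ concerns transverse directions and holds at every boundary point of every domain; the non-convexity is carried by the tangential estimate $\liminf\le\lambda<0$, which is finite, and even then one must push the segment slightly into $D$ before $g$ can be evaluated, after which your asymptotics $2p-q\to-\infty$ no longer hold as stated.) Second, your concluding chain relies on ``$f(q/2)\ge\tfrac12 f(0^+)+\tfrac12 f(q)$, using convexity of $f$''; convexity gives the \emph{reverse} inequality $f(q/2)\le\tfrac12 f(0^+)+\tfrac12 f(q)$. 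So establishing $f(p)>f(q/2)$ does not contradict the requirement $f(p)\le\tfrac12 f(0^+)+\tfrac12 f(q)$. This is not merely a sign slip: a bare midpoint-concavity violation of $s_D$ genuinely does not transfer to a convexity violation of $f\circ s_D$ (take $f=-\log$ and values $s_D=0.01,\,0.2,\,1$ at the left endpoint, midpoint and right endpoint: $0.2<\tfrac12(0.01+1)$, yet $-\log 0.2<\tfrac12(-\log 0.01-\log 1)$).

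The missing idea, which is the whole content of the paper's proof, is to extract from non-convexity (via \cite[Theorem 2.1.27]{Hor1}) a segment $[a,b]\subset D\cap U$ on which $s_D$ attains a \emph{strict minimum at the midpoint} $m$, not merely a midpoint-concavity violation. Once one has that, no estimate on $f$ is needed: a nonconstant decreasing convex $f$ is strictly decreasing on an initial interval $(0,t_0)$, the segment may be taken close enough to $\partial D$ that $s_D(m)<t_0$, and then $g(m)=f(s_D(m))>\max\bigl(g(a),g(b)\bigr)\ge\tfrac12\bigl(g(a)+g(b)\bigr)$, contradicting convexity of $g$. All of your discussion of Lipschitz bounds for $f$, of $f(0^+)=+\infty$, and of second difference quotients is unnecessary once the correct geometric input is in place, and does not repair the two gaps above.
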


In particular, if any of the defining functions given above
is convex on a neighborhood of $\partial D$, then all the others are.

\begin{proof} If $D$ is convex, then $s_D$ is concave and thus
$g=f\circ s_D$ is convex on $D.$

To prove the converse, assume that $g$ is convex on $D\cap U$ but $D$ is not convex.
By \cite[Theorem 2.1.27]{Hor1}
(see the proof of Proposition \ref{3}), we may find a segment $[a,b]\in D\cap U$ such that
$s_D(m)<s_D(x)$ for any $x\in[a,b]\setminus\{m\},$ where $m=\frac{a+b}{2}.$ On the other hand,
it follows that $f$ is
strictly decreasing and then $g(a)+g(b)<2g(m),$ a contradiction.
\end{proof}

Note that it is necessary to require that
the function $f$ be decreasing and convex as the following
example shows.

\begin{example}\label{7} Let $D=\R^+\times\R^+$ and let $f:\R^+\to\R$ be a nonconstant function
such that $f\circ s_D$ is a convex function on $D.$ Then $f$ is decreasing and convex.
\end{example}

\begin{proof} Let $g=f\circ s_D.$ Since $g(t,t)=f(t)$ for $t>0,$ it follows that $f$ is convex.
On the other hand, $2f(t)=2g(t,t)\le g(p,t)+g(2t-p,t)=f(p)+f(t),$ i.e. $f(t)\le f(p)$ for $0<p\le t.$
\end{proof}

\subsection{Pseudoconvexity}

The pseudoconvex analog of Proposition \ref{6} is the following.

\begin{proposition}\label{8} Let $f:\R\to\R$ be a nonconstant increasing and convex function. Let $U$ be a
neighborhood of the boundary of a proper domain $D$ in $\C^n.$ Then $D$ is pseudoconvex if and only if
$f\circ q_D$ is a plurisubharmonic function on $D\cap U,$ where $q_D=-\log s_D.$
\end{proposition}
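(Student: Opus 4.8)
\textbf{Proof plan for Proposition \ref{8}.}

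The plan is to reduce the non-trivial implication, as in the proof of Proposition \ref{4}, to producing a single ``bad'' analytic disc, this time testing plurisubharmonicity of $f\circ q_D$ in the base variable. The easy direction is immediate: if $D$ is pseudoconvex then $q_D=-\log s_D=-\log\dist(\cdot,\partial D)$ is plurisubharmonic on $D$ (a standard equivalent form of pseudoconvexity, cf. \cite{Jar-Pfl}), and since $f$ is convex and increasing, $f\circ q_D$ is plurisubharmonic on $D$, in particular on $D\cap U$.

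For the converse I would argue by contradiction: suppose $f\circ q_D$ is plurisubharmonic on $D\cap U$ while $D$ is not pseudoconvex. By Proposition \ref{hor}, after an affine change of coordinates and with the variables $z_2,\dots,z_{n-1}$ set equal to $0$ (reinstated at the end), I may assume exactly as in the proof of Proposition \ref{4} that $0\in\partial D$ and, near $0$, $D\supset E=\{(z,w)\in\D^2_\eps:\rho(z,w)<0\}$ with $\rho(z,w)=\Re z+(\Im z)^2-(\Re w)^2+c(\Im w)^2$, $c<1$. I then look for analytic discs $\varphi_\delta\colon\overline\D\to D\cap U$, $\delta\to0$, whose centre is much closer to $\partial D$ than the bounding circle. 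A natural candidate is $\varphi_\delta(\lambda)=\bigl(A\delta^2(\lambda^2-\gamma),\,\delta\lambda\bigr)$ with $A\in((1+c)/2,1)$ and $\gamma>0$ small. Substituting yields
$$
\rho(\varphi_\delta(\lambda))=\delta^2\bigl[(A-1)(\Re\lambda)^2+(c-A)(\Im\lambda)^2-A\gamma\bigr]+O(\delta^4),
$$
which, since $A-1<0$ and $c-A<0$, is $\le-A\gamma\delta^2+O(\delta^4)<0$ on all of $\overline\D$; hence $\varphi_\delta(\overline\D)\subset E\subset D$, and since this image lies in a ball of radius $O(\delta)$ about $0$, it also lies in $U$. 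Restricting to $\partial\D$ one gets $|\rho(\varphi_\delta(e^{i\theta}))|\ge\tfrac12(1-A)\delta^2$ for all small $\delta$.

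Next I would convert these into estimates for $q_D$. Let $M$ be a Lipschitz constant for $\rho$ near $0$. Then $B(x,|\rho(x)|/M)\subset E\subset D$ whenever $x$ is close to $0$ with $\rho(x)<0$, so $s_D(x)\ge|\rho(x)|/M$; applied on $\varphi_\delta(\partial\D)$ this gives $\min_\theta s_D(\varphi_\delta(e^{i\theta}))\ge(1-A)\delta^2/(2M)$, while $s_D(\varphi_\delta(0))\le|\varphi_\delta(0)|=A\gamma\delta^2$ because $0\in\partial D$. Choosing $\gamma$ small enough in terms of $c$ and $M$ (so that $A\gamma<(1-A)/(2M)$) forces, for all small $\delta$,
$$
q_D(\varphi_\delta(0))>\max_\theta q_D(\varphi_\delta(e^{i\theta})),
$$
the two sides tending to $+\infty$ as $\delta\to0$. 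Since $f$ is convex, increasing and nonconstant, it is strictly increasing on $[x_0,\infty)$ for some $x_0\in\R$; hence for $\delta$ small,
$$
f\bigl(q_D(\varphi_\delta(0))\bigr)>\max_\theta f\bigl(q_D(\varphi_\delta(e^{i\theta}))\bigr)\ge\int_0^{2\pi}f\bigl(q_D(\varphi_\delta(e^{i\theta}))\bigr)\frac{d\theta}{2\pi}.
$$
Since $f\circ q_D$ is continuous, this violates the sub-mean value inequality over the analytic disc $\varphi_\delta$, contradicting plurisubharmonicity of $f\circ q_D$ on $D\cap U$.

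I expect the disc construction to be the main obstacle: one must simultaneously keep $\rho$ negative on the whole \emph{closed} disc (not merely on its boundary, where the maximum principle cannot be used since $\rho\circ\varphi_\delta$ is not holomorphic), make the centre quantitatively closer to $\partial D$ than the bounding circle, and fix the eccentricity parameter $\gamma$ independently of $\delta$. The negative definiteness of the Hermitian form provided by Proposition \ref{hor} is precisely what permits the choice $(1+c)/2<A<1$ and hence makes these estimates close.
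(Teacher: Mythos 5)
Your argument is correct and follows essentially the same route as the paper: the paper's proof likewise uses Proposition \ref{hor} to produce a quadratic polynomial disc $p\in\OO(\D,D\cap U)$ with $s_D(p(0))<s_D(p(\zeta))$ for $\zeta\neq 0$ (strong Kontinuit\"atssatz) and then contradicts the maximum principle for the subharmonic function $f\circ q_D\circ p$, handling the strict monotonicity of $f$ exactly as you do; you merely construct such a disc explicitly and verify the estimates. The only detail to tidy is that your interval $\left((1+c)/2,\,1\right)$ for $A$ need not consist of positive numbers when $c\le -1$, so you should take $A\in\left(\max\{(1+c)/2,0\},\,1\right)$ (or first reduce to the case $0<c<1$, which is harmless since increasing $c$ shrinks the set $\{\rho<0\}$).
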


\begin{proof} If $D$ is pseudoconvex, then $q_D$ is plurisubharmonic and thus $g=f\circ s_D$ is
plurisubharmonic on $D.$

To prove the converse, assume that $g$ is plurisubharmonic on $D\cap U$ but $D$ is not pseudoconvex.
Note that there is $m\in\R$ such that $f(x)$ is strictly increasing for $x>m.$ We may assume that
$-\log s_D>m$ on $D\cap U.$ Using Proposition \ref{hor},
we may easily find a quadratic polynomial map
$p\in\OO(\D,D\cap U)$ ($\D$ is the unit disc) such that $s_D(p(0))<s_D(p(\zeta))$ for any $\zeta\in\D_\ast$
(strong Kontinuit\"atsatz). Then $g(p(0))>g(p(\zeta))$ which contradicts to the maximum principle for the subharmonic
function $g\circ p.$
\end{proof}

Note that there is a smooth bounded pseudoconvex domain in $\C^2$ having no defining function which is
plurisubharmonic on a two-sided neighborhood of the boundary. We do not know under which general conditions on $f$ the
plurisubharmonicity of $f\circ s_D$ is equivalent to the pseudoconvexity of $D.$

We also point out that the proofs of Propositions \ref{3} and \ref{8}
imply a similar result to Propositions \ref{2} and \ref{3} in
the pseudoconvex case:

\begin{proposition}\label{9} If $D$ is a proper open set in $\C^n$ and
for any $a\in\partial D$ there exists a complex hyperplane $S_a$ through $a$ such that
$$
\liminf_{S_a\ni z\to a}\frac{s_D(z)+s_D(a+J(z-a))}{|z-a|^2}\ge 0,
$$
where $J$ is the standard complex structure, then $D$ is pseudoconvex.
\end{proposition}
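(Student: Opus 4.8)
The plan is to argue by contraposition, following the pattern of the proofs of Propositions \ref{3} and \ref{8}. Suppose $D$ is not pseudoconvex. By the first assertion of Proposition \ref{hor} --- which involves only a translation, so that the Euclidean signed distance is not distorted --- there are a point $a\in\partial D$, a real quadratic polynomial $q$ with $q(a)=0$ and $\partial q(a)\neq 0$, and a vector $X_0$ with $\langle\partial q(a),X_0\rangle=0$ on which the Levi form $\mathcal L_q(a)$ of $q$ at $a$ is negative, such that $D\supset\{q<0\}$ near $a$. Fix a ball $B$ about $a$ with $\{q<0\}\cap B\subset D$ and put $L=\sup_B|\nabla q|\in(0,\infty)$. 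I will show that for \emph{every} complex hyperplane $S_a$ through $a$,
$$
\liminf_{S_a\ni z\to a}\ \frac{s_D(z)+s_D(a+J(z-a))}{|z-a|^2}\ <\ 0,
$$
which contradicts the hypothesis of Proposition \ref{9}.

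The key preliminary is an upper bound for $s_D$ on $\{q<0\}$ near $a$: if $z\in\{q<0\}$ is close to $a$, the nearest boundary point of $D$ lies in $B$, hence not in $\{q<0\}$, so an elementary separation argument gives $\dist(z,\partial D)\ge\dist(z,\{q=0\})$, and the mean value inequality gives $\dist(z,\{q=0\})\ge|q(z)|/L$; since $s_D<0$ inside $D$, this yields $s_D(z)\le q(z)/L<0$. Hence, with $Q(\zeta):=q(a+\zeta)+q(a+i\zeta)$,
$$
s_D(a+\zeta)+s_D(a+J(z-a))\ \le\ \frac{Q(\zeta)}{L},\qquad \zeta=z-a,
$$
whenever $a+\zeta$ and $a+i\zeta$ both lie in $\{q<0\}\cap B$. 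A direct computation (the purely holomorphic second-order part of $q$ cancels) shows that the linear part of $Q$ is $\zeta\mapsto 2\Re\langle\partial q(a),\zeta\rangle-2\Im\langle\partial q(a),\zeta\rangle$, which vanishes on the complex tangent $W=\{\zeta:\langle\partial q(a),\zeta\rangle=0\}$, while the quadratic part of $Q$, namely $b(\zeta)+b(i\zeta)$ with $b$ the pure quadratic part of $q$ about $a$, equals $2\mathcal L_q(a)(\zeta)$ --- negative at $X_0\in W$.

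Now write $S_a=a+V$, $V$ a complex $(n-1)$-dimensional subspace. If $V\not\subset W$, then $\zeta\mapsto\langle\partial q(a),\zeta\rangle$ carries $V$ onto $\C$, so we pick $X\in V$ with $\langle\partial q(a),X\rangle=-1+i$; the linear terms of $q(a+sX)$, $q(a+isX)$ and $Q(sX)$ are then $-2s$, $-2s$, $-4s$, so for small $s>0$ the points $a+sX$, $a+isX$ lie in $\{q<0\}\subset D$ while $Q(sX)=-4s+O(s^2)$, and dividing by $L|sX|^2$ forces the $\liminf$ above to be $-\infty$. If instead $V\subset W$, then $V=W$ by a dimension count, so $X_0\in V$; since $q(a+\zeta)=b(\zeta)$ for $\zeta\in W$ and $b(X_0)+b(iX_0)=2\mathcal L_q(a)(X_0)<0$, and since $\theta\mapsto b(e^{i\theta}X_0)$ has the form $\mathcal L_q(a)(X_0)+R\cos(2\theta-\psi)$, we may pick $\theta_*$ with $\cos(2\theta_*-\psi)=0$, so that $X:=e^{i\theta_*}X_0\in V$ satisfies $b(X)=b(iX)=\mathcal L_q(a)(X_0)<0$. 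Then $a+sX$, $a+isX$ lie in $\{q<0\}\subset D$ for small $s$, while (the linear part of $Q$ vanishing on $V$) $Q(sX)=s^2(b(X)+b(iX))=2s^2\mathcal L_q(a)(X_0)$, so the $\liminf$ above is at most $2\mathcal L_q(a)(X_0)/(L|X_0|^2)<0$. Either way $S_a$ violates the defining property in Proposition \ref{9}; as $S_a$ was arbitrary, we reach the contradiction and Proposition \ref{9} follows.

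The step I expect to be the main obstacle is the second case: one must place \emph{both} members of the pair $\{a+\zeta,\ a+i\zeta\}$ inside $\{q<0\}$ (hence in $D$), and when the hyperplane forces the admissible direction into the complex tangent $W$ this is not automatic from $Q(\zeta)<0$, since $q(a+\zeta)+q(a+i\zeta)<0$ alone only makes one of the two summands negative. The rotation $X=e^{i\theta_*}X_0$ is precisely what balances $q(a+sX)$ against $q(a+isX)$ while, by the $J$-invariance of the Levi form, keeping both negative --- a phenomenon absent from Propositions \ref{2} and \ref{3}, where $D$ opens into a half-space and every real line through the bad point already carries a negative linear term.
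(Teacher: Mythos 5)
Your proof is correct, and it is essentially the argument the paper has in mind: the paper gives no written proof of Proposition \ref{9}, only the remark that it follows from the proofs of Propositions \ref{3} and \ref{8}, i.e.\ by contraposition via Proposition \ref{hor}. Your write-up supplies exactly the two details that sketch leaves implicit --- the Lipschitz bound $s_D\le q/L$ on $\{q<0\}$ (playing the role of $s_D\le s_G$ in Proposition \ref{3}), and the rotation $X=e^{i\theta_*}X_0$ that puts both $a+sX$ and $a+isX$ into $\{q<0\}$ when $S_a$ is the complex tangent hyperplane, which is precisely where the $J$-symmetrization in the statement is needed --- and both are handled correctly.
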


The converse is also true if $D$ is a $C^2$-smooth open set.
We do not know if the smoothness can be weakened.

\section{Slicing}
\label{slice}

It is known that an open set $D$ in $\C^n$ ($n\ge 3$) is
pseudoconvex if and only if any 2-dimensional slice of $D$ is
pseudoconvex \cite{Hit} (see also \cite{Jac}). Following the idea
in \cite{Jac}, we would like to restrict the family of slices
that has to be used in order to detect pseudoconvexity, namely
we would like to consider the family of complex planes passing
through a common point $a \in \C^n$. As the next results show, it will be
enough generically.  Given $D$ be a open non-pseudoconvex set in $\C^n$,
call $a$ \emph{exceptional} with respect to $D$ if for any $2$-dimensional complex plane
$P \ni a$, $P\cap D$ is pseudoconvex.  The next proposition shows that
the set of exceptional points has to be contained in a complex hyperplane.

\begin{proposition}\label{11} Let $D$ be a open non-pseudoconvex set in $\C^n$ ($n\ge 3$).
Let $S$ be the union of all $2$-dimensional complex planes with non-empty and
non-pseudoconvex intersections with $D,$ so the set of
exceptional points is $\C^n\setminus S$. Then there exists a complex hyperplane $T$
such that $\C^n\setminus S\subset T.$
\end{proposition}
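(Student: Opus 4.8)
The plan is to show that $S$ is a dense open set whose complement cannot contain two points spanning a generic pair of directions, forcing $\C^n \setminus S$ into an affine hyperplane. First I would record that $S$ is open: if a $2$-plane $P$ meets $D$ in a non-pseudoconvex set, then by Proposition \ref{hor} there is a point $b \in P \cap \partial D$ near which $P \cap D$ contains a set of the form $\{q<0\}$ with $q$ quadratic and with the stated signature; this local picture is stable under small perturbations of $P$ (the quadratic inequality and the negative Levi eigenvalue are open conditions on the coefficients), so every nearby $2$-plane $P'$ also has non-pseudoconvex intersection with $D$. Hence $S$ is open and $\C^n \setminus S$ is closed.

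Next I would argue that $\C^n \setminus S$ is \emph{affine}, i.e. closed under taking the complex line through any two of its points. Since $D$ is not pseudoconvex, by Hitchcock's theorem (quoted at the start of this section) some $2$-dimensional slice of $D$ is not pseudoconvex, so $S \neq \emptyset$; fix a $2$-plane $P_0 \subset S$. The key point: if $a, a' \in \C^n \setminus S$ are distinct, consider the complex line $\ell$ through $a$ and $a'$. For \emph{every} $2$-plane $P \supset \ell$ we must have $P \cap D$ pseudoconvex, since $P \ni a$ and $a$ is exceptional. Now take any $2$-plane $Q$ meeting $D$ non-pseudoconvexly (so $Q \subset S$). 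I want to relate $Q$ to planes containing $\ell$. Here is where the dimension $n \ge 3$ enters: choose the $2$-plane spanned by $\ell$ and a generic translate of one direction of $Q$—but rather than chase this directly, I would instead argue as follows. Suppose, for contradiction, that $\C^n \setminus S$ is not contained in any hyperplane. Then it contains $n+1$ points in general position, hence (being affine, once that is established) it contains $n+1$ affinely independent points and therefore contains affine lines in $n$ independent directions through a common point $p$; taking pairwise spans, $\C^n \setminus S$ contains a full real-neighborhood's worth of $2$-planes through $p$. But then, by the openness of $S$, \emph{every} $2$-plane sufficiently close to one of these also lies in $\C^n \setminus S$, and in fact $p$ would be exceptional with respect to an open set of directions dense enough to conclude, via Hitchcock's $2$-dimensional criterion applied at $p$, that $D$ is pseudoconvex—a contradiction.

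The main obstacle, and the step I would spend the most care on, is proving that $\C^n \setminus S$ is affine: that if $a, a'$ are both exceptional then so is every point on the line $\ell$ through them. To see this, let $c \in \ell$ and let $P$ be any $2$-plane through $c$. If $\ell \subset P$ then $a \in P$ and exceptionality of $a$ gives $P \cap D$ pseudoconvex. If $\ell \not\subset P$, let $P'$ be the $2$-plane containing $\ell$ and parallel to (a chosen direction of) $P$—more precisely, pick a direction $X$ with $P = c + \span\{X, c'-c\}$ for some $c'$, and consider instead the $3$-dimensional affine space $W = a + \span\{a'-a, X\}$; every $2$-plane inside $W$ passing through $a$ has pseudoconvex intersection with $D$, and by Hitchcock applied inside $W \cong \C^2$ or by a limiting/continuity argument along the pencil of $2$-planes in $W$ through the line $\ell$, one propagates pseudoconvexity of $D \cap W$ and hence of $D \cap P$. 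Making this propagation rigorous—handling the non-pseudoconvex locus's behaviour under the $1$-parameter family of slices and invoking openness of $S$ to rule out a non-pseudoconvex plane "appearing in the limit"—is the technical heart, and I expect it to occupy the bulk of the argument; once $\C^n\setminus S$ is known to be a closed affine set not equal to all of $\C^n$ (it omits $P_0$), it is automatically contained in a complex hyperplane, which is the assertion.
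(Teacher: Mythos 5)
There is a genuine gap, and in fact the central step of your plan is false. You reduce everything to the claim that $\C^n\setminus S$ is \emph{affine}, i.e.\ closed under taking the complex line through any two of its points, and you yourself flag its proof as the ``technical heart'' while only sketching a vague propagation/limiting argument along a pencil of $2$-planes. No such propagation is available, and the claim itself fails: in Example \ref{14}(i) of this very paper (take $G=\B^3$ the unit ball in $\C^3$ and $l_1$ a complex line through the origin meeting $G$), the set of exceptional points of $D=G\setminus l_1$ is exactly $l_1\setminus G$, a complex line with a disc removed. This set contains pairs of points whose joining line is $l_1$, yet it does not contain all of $l_1$; so the exceptional set is not affine, it is merely \emph{contained} in a hyperplane. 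Your concluding step has a second problem: from ``all $2$-planes through $p$ have pseudoconvex intersection'' you want to conclude that $D$ is pseudoconvex via the two-dimensional slicing theorem of Hitotunatu, but that theorem requires \emph{all} $2$-planes, not only those through one point --- indeed Example \ref{13} exhibits a non-pseudoconvex domain for which every $2$-plane through the origin gives a pseudoconvex slice, so this inference is exactly what the present section shows can fail.

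The paper's proof avoids all of this by being completely direct. Using Proposition \ref{hor} one normalizes so that $0\in\partial D$ and $D$ contains, near $0$, the model set $\{\Re z_1+\|z\|^2-c(\Re z_n)^2<0\}$ with $c>2$; the candidate hyperplane is then $T=\{z_1=0\}$, the complex tangent hyperplane to this model at $0$. For any point $a$ with $a_1\neq 0$, the $2$-plane $L=\span(\overrightarrow{a},\overrightarrow{e_n})$ meets $\{z_1=0\}$ transversally, contains the direction $e_n$ along which the Levi form of the defining function is negative, and a strong Kontinuit\"atssatz argument with a quadratic analytic disc shows $D\cap L$ is not pseudoconvex. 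Hence every point off $T$ lies in $S$, which is the assertion. If you want to salvage your approach, you would need to replace the affineness claim by the correct statement --- that the complement of $S$ lies in the complex tangent hyperplane at a suitable Levi-pseudoconcave boundary point --- which is precisely what the direct argument establishes.
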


\begin{proof} By Proposition \ref{hor},
we may suppose that $0\in\partial D$ and $D\supset G\cap\D^n(0,\eps)$ for
some $\eps>0,$ where
$$G=\{z\in\C^n:0>r(z)=\Re z_1+||z||^2-c(\Re z_n)^2\}, \quad c>2.$$

Choose a point $a\in\C^n$ which does not belong to the complex tangent
hyperplane to $\partial G$ at $0$, i.e.
with non-zero first coordinate. It is enough to show that if
$L=\span(\overrightarrow{a},\overrightarrow{e_n}),$ then $D'=D\cap
L$ is not pseudoconvex.
For this, note that since $\{z_1=0\}\cap L$ is a transverse intersection,
$G\cap L$ is a smooth domain near $0$ and since $\C e_n \subset \{z_1=0\}\cap L$,
the Levi form of $r|_L$ has a negative
eigenvalue at $0.$ Set $G'=G\cap L\cap\D^n(0,\eps).$ Then there is
a quadratic polynomial map
$\vphi\in\OO(\D,G')\subset\OO(\D,D')$ with
$$||\vphi(0)||<\dist(\vphi(\zeta),\partial G')\le
\dist(\vphi(\zeta,\partial D'),\quad\zeta\in\D_\ast$$ (we have already used this argument
in the proof of Proposition \ref{8}) which shows
that $D'$ is not pseudoconvex.
\end{proof}

If $D$ is $C^2$-smooth, the set of exceptional points with respect to $D$
has to be smaller (compare with Example \ref{14}(i)).

\begin{proposition}\label{12} Let $D$ and $S$ be as in Proposition \ref{11}.
If $D$ is $C^2$-smooth and non-pseudoconvex near some boundary point, then there
exists a complex plane $T$ of codimension $3$ such that $\C^n\setminus S\subset T.$
\end{proposition}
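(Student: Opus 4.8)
The plan is to prove the sharper statement that the set $\C^n\setminus S$ of exceptional points is contained in the complex affine subspace $M:=\bigcap_p H_p$, the intersection running over all $p\in\partial D$ at which the Levi form of $D$ has a negative eigenvalue ($H_p$ being the complex tangent hyperplane to $\partial D$ at $p$), and then to show that $C^2$-smoothness forces $\operatorname{codim}_{\C}M\ge 3$ (or $M=\emptyset$), so that any codimension-$3$ complex plane $T\supset M$ does the job. First I would observe that, since $D$ is $C^2$-smooth and non-pseudoconvex near a boundary point and the Levi form varies continuously, the set $W$ of boundary points where the Levi form has a negative eigenvalue is a nonempty open subset of $\partial D$ (non-pseudoconvexity of a $C^2$ domain means precisely that the Levi form has a negative eigenvalue at some boundary point, and this is an open condition); the intersection defining $M$ may then be restricted to $p\in W$.

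To get $\C^n\setminus S\subset M$, let $a\notin M$; then $a\notin H_p$ for some $p\in W$, i.e. $a-p\notin T^{\C}_p\partial D$. Pick $w\in T^{\C}_p\partial D$ on which the Levi form of $D$ at $p$ is negative and set $\Pi=\span(a-p,w)$, a $2$-dimensional complex subspace. Since a complex line contained in $T_p\partial D$ must lie in $T^{\C}_p\partial D$, while $a-p\notin T^{\C}_p\partial D$, the complex $2$-plane $V=p+\Pi$ is transverse to $\partial D$ at $p$; hence near $p$ the open set $V\cap D$ is a $C^2$ domain in $V\cong\C^2$ whose Levi form at $p$, carried by $\Pi\cap T^{\C}_p\partial D=\C w$, is negative. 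So $V\cap D$ is nonempty and not pseudoconvex, whence $V\subset S$ and in particular $a\in S$.

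It remains to bound $\operatorname{codim}_{\C}M$. If $\operatorname{codim}_{\C}M\le 1$, then $M=\C^n$ (absurd) or $M$ is a hyperplane; being contained in each hyperplane $H_p$ it then coincides with every $H_p$, so $W\subset M$ since $p\in H_p$ — impossible, as $W$ has real dimension $2n-1$ and $M$ real dimension $2n-2$. Suppose $\operatorname{codim}_{\C}M=2$ and write $M=\ell^{-1}(0)$ for a surjective complex affine map $\ell=(\ell_1,\ell_2):\C^n\to\C^2$. For $p\in W\setminus M$ the hyperplane $H_p$ contains $M$ and passes through $p$, so it is exactly the fibre through $p$ of $\Phi:=[\ell_1:\ell_2]:\C^n\setminus M\to\mathbb{CP}^1$; thus the fibres of $\Phi$ have direction $T^{\C}_p\partial D$ at $p$, and the differential of $\Phi|_W$ at $p$ has kernel $T_p\partial D\cap T^{\C}_p\partial D=T^{\C}_p\partial D$, so $\Phi|_W$ has constant rank $1$ on $W\setminus M$. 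By the constant rank theorem the fibre of $\Phi|_W$ through a point $p_0\in W\setminus M$ is a real $(2n-2)$-dimensional submanifold of $\partial D$ contained in the complex hyperplane $H_{p_0}$; having the dimension of $H_{p_0}$, it is open in $H_{p_0}$. Thus $\partial D$ contains a nonempty open piece of the complex hyperplane $H_{p_0}$, forcing the Levi form of $D$ at $p_0$ to vanish on $T^{\C}_{p_0}\partial D$ — contrary to $p_0\in W$. Hence $\operatorname{codim}_{\C}M\ge 3$ and, $M$ being empty or of dimension $\le n-3$, we may choose a codimension-$3$ complex plane $T\supset M$.

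The delicate point is the case $\operatorname{codim}_{\C}M=2$: one has to extract from the mere coincidence of the complex tangent hyperplanes of $\partial D$ along the open set $W$ the fact that $\partial D$ actually contains a piece of a complex hyperplane. This is exactly where the $C^2$ hypothesis enters — through the existence of a Levi form, the constancy of the rank of $\Phi|_W$, and the constant rank theorem — and it is what improves the codimension $1$ of Proposition \ref{11} to codimension $3$ here.
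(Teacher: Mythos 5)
Your proof is correct, and it takes a genuinely different route from the paper in the decisive step. The two arguments share the same skeleton: every exceptional point must lie in the complex tangent hyperplane $H_p$ at each boundary point $p$ where the Levi form has a negative eigenvalue (the paper obtains this by quoting the proof of Proposition \ref{11}; you reprove it directly with a transversality argument, which is sound), and one must then rule out the possibility that all these hyperplanes contain a common affine subspace of complex codimension $\le 2$. The paper handles this by reduction to a normal form: it arranges that the defining function depends only on $z_1,z_n$, passes to the two-dimensional slice $D'=D\cap\{z_2=\dots=z_{n-1}=0\}$, notes that all complex tangent lines of $\partial D'$ near $0$ pass through $0$, and deduces from the resulting Euler differential equation that the graphing function $\rho$ is homogeneous of degree $1$, hence linear by $C^1$-smoothness, so that $\partial D'$ is flat and pseudoconvex --- a contradiction. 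You instead stay in $\C^n$: when $\operatorname{codim}_{\C}M=2$ the hyperplanes $H_p$ form the pencil through $M$, the induced map from $W\setminus M$ to the pencil has constant rank $1$, and the constant rank theorem forces $\partial D$ to contain a relatively open piece of the hyperplane $H_{p_0}$, which annihilates the Levi form on $T^\C_{p_0}\partial D$ and contradicts $p_0\in W$. The two mechanisms (Euler homogeneity versus constant rank) capture the same geometric obstruction, but yours avoids the coordinate normalization and the explicit model $G$ at the price of invoking the constant rank theorem for the $C^1$ map into the pencil; your dimension count in the codimension $\le 1$ case and the verification that a flat piece of boundary kills the Levi form are both correct, so the argument is complete.
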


\begin{proof} Note that the respective boundary point, say $0,$
satisfies the conclusions of Proposition \ref{hor}.
We have the same for any point $a\in\partial D$ near $0.$ Assume that
$u,v\not\in S.$ Let $r$ be a $C^2$-smooth
defining function for $D$ near $0.$ By the proof of Proposition
\ref{11}, the complex tangent hyperplane
to $\partial D$ at any such $a$ contains $u$ and $v.$ Then it is easy
to see that the derivative of $r$ in direction
$u-v$ vanishes at any such $a.$ So, if $\C^n\setminus S$ does not lie
in a complex plane of codimension $3$, we may
assume that near $0,$ $r$ depends only on  two coordinates, $z_1$ and $z_n$,
and so we can take (as in the proof above)
$$
G=\{z\in\C^n: 0>r_G(z):=\Re z_1+(\Im z_1)^2+|z_n|^2-c(\Re z_n)^2\}, \quad c>2.
$$

We already know that for any exceptional point $b$, $b_1=0$. Suppose
that $b_n\neq 0$. Consider the complex plane $P$ through $0$ spanned
by $b$ and $e_1$. Then
the complex tangent space to $P\cap D$ at $0$ is $\C e_n$,
$$
r_G (\zeta e_1 + \xi b) = r_G (\zeta_1 e_1 + \xi b_n e_n),
$$
so that the intersection with that plane is not pseudoconvex at $0$.
Therefore the set of exceptional points is contained in $\{z_1=z_n=0\}$.

Let $D'=D\cap\{z_2=\dots=z_{n-1}=0\}.$ The complex tangent space to
any point in $\partial D$ near $0$ must pass through an exceptional point,
so the complex tangent line to any point in $\partial D'$ near $0$
passes through $0$. In particular, the same holds
for the real tangent hyperplanes. 

Taking a $C^1$-smooth defining function of $D'$ near $0$ of the form $x_1-\rho(u),$ where $u=(y_1,x_n,y_n),$
we get the Euler differential equation
$$
\rho(\tilde u)=\frac{\partial\rho(\tilde u)}{\partial y_1}\widetilde{y_1}+
\frac{\partial\rho(\tilde u)}{\partial x_n}\widetilde{x_n}+\frac{\partial\rho(\tilde u)}{\partial y_n}\widetilde{y_n}.
$$
Hence $\rho$ is a homogeneous function of order $1$ and the $C^1$-smoothness near $0$ implies that $\rho$ is linear.
It follows that $\partial D'$ is a hyperplane near $0$ and so it is pseudoconvex there, which is a contradiction.
\end{proof}

The following example shows that there can be an exceptional point even in the 3-dimensional case, when
the boundary is smooth except one point.

\begin{example}\label{13} There exists a an unbounded domain in $\C^3$ with real-analytic boundary except
one point, the origin, which has exactly one exceptional point, namely the origin.
\end{example}

\begin{proof}
Let $\Omega= \{|z_3|^2<|z_1|^2+|z_2|^2<4|z_3|^2\}$ and
$\rho(z)=|z_3|^2-|z_1|^2-|z_2|^2.$ For a point
$0\neq z^0=(z_1^0, z_2^0,z_3^0)$ with $\rho(z^0)=0,$
the restriction of $\rho$ to the horizontal complex line inside the
complex tangent
hyperplane is
$$
\rho \left(  z_1^0-\lambda \bar z_2^0, z_2^0 +\lambda \bar z_1^0,z_3^0\right)
= -|\lambda|^2 (|z_1|^2 +|z_2|^2 ).
$$
Using homogeneity,
it is easy to check that the Levi form of $\rho$ is semidefinite
negative with one strictly negative eigenvalue (in particular,
$\rho \equiv 0$ along the line through the origin and $(z_1^0, z_2^0,z_3^0)$).

Now let $P=\{ \alpha_1 z_1 + \alpha_2 z_2 + \alpha_3 z_3 =0\}$. If
$\alpha_3\neq 0$, since $\Omega$ is invariant under  rotations in the
$(z_1,z_2)$-plane, we may assume
$P=\{ z_3= \alpha z_1\},$ where $\alpha\ge 0.$  Set $D_\alpha=\{z\in
\C^3:|z_2|^2<(4\alpha^2-1)|z_1|^2\}$ for $\alpha>1/2$,
and $G_\alpha=\{z\in \C^3:(\alpha^2-1)|z_1|^2<|z_2|^2\}$ for
$\alpha>1.$ Note that both domains are pseudoconvex.
Then $\Omega\cap P=\emptyset$ if $\alpha\le 1/2,$
$\Omega\cap P=D_\alpha \cap P$ if $1/2<\alpha < 1$,
$\Omega\cap P=D_\alpha\cap P\cap(\C\times\C_\ast\times\C)$ if $\alpha = 1$ and
  and
$\Omega\cap P=D_\alpha\cap G_\alpha\cap P$ if $\alpha>1$; these
intersections are
pseudoconvex.

If $\alpha_3=0$, using a rotation again, we may assume $P=\{z_2=0\}$. Then
$\Omega\cap P=\{|z_3|^2<|z_1|^2<4|z_3|^2\}$ which is pseudoconvex.

So, the origin is an exceptional point.

On the other hand, it follows by the proof of Proposition \ref{11}
that an exceptional point belongs to the
complex tangent hyperplane to $\partial D$ at any $z^0$ as above. This
implies that the origin is the only
exceptional point.
\end{proof}

In the 3-dimensional case we may have more than one exceptional point.

\begin{example}\label{14} Let $a\in\C^3,$ $G$ be a pseudoconvex set in $\C^3,$
and let $l_1,l_2$ be distinct complex lines in $\C^3$ that intersect $G.$ Then:

(i) any intersection of $G\setminus l_1$ with a 2-dimensional complex plane through $a$
is pseudoconvex if and only if $a\in l_1\setminus G.$

(ii) any intersection of $G\setminus(l_1\cup l_2)$ with a 2-dimensional complex plane through $a$
is pseudoconvex if and only if $G\not\ni a=l_1\cap l_2.$
\end{example}

\begin{proof} (i) Let $P$ be a 2-dimensional complex hyperplane through $a.$
Let first $a\in l_1\setminus G.$ If $l_1\not\subset P,$ then $G_1:=(G\setminus l_1)\cap P=G\cap P=:G_P$
is pseudoconvex. Otherwise, $G_1$ is pseudoconvex as the intersection of the pseudoconvex sets
$G_P$ and $P\setminus l_1.$

Let now $a\not\in l_1\setminus G.$ If $a\in G\cap l_1$ and $P$ contains no $l_1,$ then
$G_1=G_P\setminus\{a\}$ is not pseudoconvex. Otherwise, $a\not\in l_1$ and if $P$ intersect $l_1$
at $b\in G,$ then $G_1=G_P\setminus\{b\}$ is not pseudoconvex.

(ii) The proof is similar to that of (i) and we skip it.
\end{proof}

Using Proposition \ref{2}, similar arguments as in the proof of Proposition \ref{11}
implies that $a$ is a point in $C^2$-smooth domain $D$
such that any non-empty intersection of $D$ with a 2-dimensional
complex plane through $a$ is weakly linearly convex, then
$D$ is $\C$-convex.

The following example shows that we have no such  phenomenon in general.

\begin{example}\label{15}
Let
$$D=\{z\in\C^3:|z|<\sqrt2\max\{|z_1|,|z_2|,|z_3|\}\}.$$
Then $D$ is a union of three disjoint linearly convex domains and $D$
has a non-empty linearly convex intersection with any complex plane through $0$
(in particular, $D$ is pseudoconvex and not weakly linearly convex).
\end{example}

\begin{proof}
Letting $D_j= \{ z\in\C^3: |z_j|^2 > \sum_{1\le k \le 3, k\neq j} |z_k|^2\}$, we see that $D=\cup_{j=1}^3 D_j$. Clearly the $D_j$ are pairwise disjoint, and obtained one from the other by unitary transformations (permutations of coordinates).

$\C^3 \setminus D_3 $ is the union of the complex planes of the form
$\{z_3= \alpha_1 z_1 + \alpha_2 z_2\}$ for $|\alpha_1|^2 + |\alpha_2|^2 \le 1\}$.  So all $D_j$ are linearly convex.

For any complex plane $P$, $D\cap P$ is a union of punctured complex lines through $0$, so (as in the proof of Example \ref{13}), it is pseudoconvex
if and only if it is smaller than $P\setminus \{0\}$. But $P\setminus \{0\}$
is connected, so if $P\setminus \{0\} \subset D$, then there exists a $j$ such that $P\setminus \{0\} \subset D_j$. Since $D_j$ is linearly convex, it must be pseudoconvex, so by Hartog's phenomenon, it would contain $0$, a contradiction.

To finish the proof and show that $D$ is not linearly convex, we will show that its complement contains no complex plane $P$. By permuting coordinates, we may assume that $P= \{z_3= \alpha_1 z_1 + \alpha_2 z_2\}$, with
$|\alpha_1| \le 1$, $|\alpha_2| \le 1$.  If we suppose that one of those
inequalities is strict, say $|\alpha_1| < 1$, then the points in $P$
such that $z_2=0$ verify $|z_1|^2 > |\alpha_1 z_1|^2  = |z_3|^2 + |z_2|^2$
and $P\cap D_1 \neq \emptyset$. If $|\alpha_1| =|\alpha_2| = 1$,
there are points $(z_1,z_2,z_3)\in P$ such that $ |z_3|=  |z_1|+ |z_2|$
and $z_1z_2 \neq 0$,
so $ |z_3|^2 = |z_1|^2 + |z_2|^2 + |z_1| |z_2| > |z_1|^2 + |z_2|^2$, thus
$P\cap D_3 \neq \emptyset$.
\end{proof}

In spite of Example \ref{15}, one may also conjecture the following:
\smallskip

If $D$ is an open set in $\C^n$ such that any non-empty
intersection with 2-dimensional complex plane is (weakly)
linearly convex, then $D$ is (weakly) linearly convex.
\medskip


\begin{thebibliography}{1}

\bibitem{Hor1} L.~H\"ormander, {\it Notions of convexity},
Birkh\"auser, Basel--Boston--Berlin, 1994.

\bibitem{Hor2} L.~H\"ormander, {\it Weak linear convexity and a related notion of
concavity}, Math. Scan. 102 (2008), 73--100.

\bibitem{Her-McN} A.-K.~Herbig, J.~D.~McNeal, {\it Convex defining functions for
convex domains}, J. Geom. Anal., DOI: 10.1007/s12220-010-9202-8.

\bibitem{Jac} R.~Jacobson, {\it Pseudoconvexity is a two-dimensional
phenomenon}, arXiv: 0907.1304.

\bibitem{Hit} S. Hitotunatu, {\it On some conjectures concerning pseudo-convex domains},
J. Math. Soc. Japan Volume 6 (1954), 177--195.

\bibitem{Jar-Pfl} M.~Jarnicki, P.~Pflug, {\it Extension of holomorphic
functions}, de Gruyter, Berlin, 2000.

\bibitem{Nik-Tho} N.~Nikolov, P.~J.~Thomas, {\it ``Convex''
characterization of linearly convex domains}, arXiv:1009.5022.

\end{thebibliography}
\end{document}